\newcommand{\tsp}[1]{{}^t \! #1}
\newcommand{\ii}{\mathrm{i}}
\newcommand{\im}{\operatorname{im}}
\newcommand*\diff{\mathop{}\!\mathrm{d}}
\newcommand{\Id}{\operatorname{Id}}
\newcommand{\R}{\mathbb R}
\newcommand{\N}{\mathbb N}
\newcommand{\CC}{\mathbb C}
\newcommand{\CCC}{\mathscr{C}}
\newcommand{\T}{\mathcal T}
\newcommand{\F}{\mathcal{F}}
\newcommand{\End}{\operatorname{End}}
\newcommand{\Ad}{\operatorname{Ad}}
\newcommand{\ad}{\operatorname{ad}}
\newcommand{\g}{\mathfrak{g}}
\newcommand{\Heis}{\operatorname{Heis}}
\newcommand{\heis}{\mathfrak{heis}}
\newcommand{\act}{\curvearrowright}
\newcommand{\lag}{\langle}
\newcommand{\rag}{\rangle}
\newcommand{\op}{\operatorname{Op}}
\newcommand{\ttt}{\mathfrak{t}}
\newcommand{\Sym}{\operatorname{Sym}}
\newcommand{\Cl}{\operatorname{Cl}}
\newcommand\isomto{\stackrel{\sim}{\smash{\longrightarrow}\rule{0pt}{0.4ex}}}
\def\dar[#1]{\ar@<2pt>[#1]\ar@<-2pt>[#1]}
\newcommand{\eq}[1][r]
   {\ar@<-3pt>@{-}[#1]
    \ar@<-1pt>@{}[#1]|<{}="gauche"
    \ar@<+0pt>@{}[#1]|-{}="milieu"
    \ar@<+1pt>@{}[#1]|>{}="droite"
    \ar@/^2pt/@{-}"gauche";"milieu"
    \ar@/_2pt/@{-}"milieu";"droite"}
\theoremstyle{plain}
\newtheorem{thm}{Theorem}[section]
\newtheorem{lem}[thm]{Lemma}
\newtheorem{prop}[thm]{Proposition}
\newtheorem{cor}{Corollary}[thm]
\newtheorem{ex}[thm]{Example}
\theoremstyle{definition}
\newtheorem{mydef}[thm]{Definition}
\theoremstyle{remark}
\newtheorem{rem}[thm]{Remark}
\DeclareSymbolFontAlphabet{\mathbb}{AMSb}
\DeclareSymbolFontAlphabet{\mathbbl}{bbold}
\title[]{Fields of Toeplitz algebras for the principal symbols of regular 2-step nilpotent groups}
\author{Clément Cren}
\address{Mathematisches Institut\\
Georg-August Universität Göttingen\\
Bunsenstraße 3-5\\
D-37073 Göttingen\\
Deutschland}
\email{\href{mailto:clement.cren@mathematik.uni-goettingen.de}{clement.cren@mathematik.uni-goettingen.de}}
\begin{document}

\begin{abstract}
    We show that the C*-algebra of a regular 2-step nilpotent lie group can be recovered using continuous fields of Toeplitz algebras and a crossed product. We generalize this result to polycontact manifolds in the sense of van Erp which are endowed with fields of such groups. We also investigate those manifolds with a more rigid structure, namely those modeled on H-type groups. In all those cases, there is a certain pseudodifferential calculus named filtered calculus, we show that the algebra of principal symbols can also be recovered from the field of Toeplitz algebras.
\end{abstract}

\maketitle

\section{Introduction}

The study of certain hypoelliptic differential operators led to the introduction of a new pseudodifferential calculus, taking into account a certain anisotropy in the ambient space in the definition of its principal symbols. Here we will be interested in manifolds \(M\) endowed with a distribution \(H\subset TM\) such that:
\[\Gamma(H) + \left[ \Gamma(H),\Gamma(H)\right] = \Gamma(TM) .\]
Here \(\Gamma\) denotes the global sections of a vector bundle (the same statement works with compactly supported sections instead, the manifold \(M\) will be assumed compact anyway). The idea of this new calculus, following e.g. \cite{FollandStein,BealsGreiner}, is to consider vector fields in \(\Gamma(H)\) still as differential operators of order 1, but the other vector fields as having order 2. The principal symbol of an operator is then understood as a convolution operator on a family of graded nilpotent groups, here of step 2, the osculating groups. This family arise from the vector bundle \(\ttt_HM = H \oplus \faktor{TM}{H}\), endowed with the Lie bracket on sections given by the curvature form:

\begin{align*}
    B \colon \Gamma(H) \times \Gamma(H) &\to \Gamma\left(\faktor{TM}{H}\right) \\
                                  (X,Y) &\mapsto [X,Y]_{\Gamma(TM)} \mod \Gamma(H).
\end{align*}

We have \(B \in \Gamma(\Lambda^2H^* \otimes \faktor{TM}{H})\) so the Lie brackets can be localized on each fiber of \(\ttt_HM\) which is thus a family of Lie algebras, the osculating Lie algebras. These algebras being nilpotent, they can be integrated into a family of connected, simply connected Lie groups \(T_HM\) using the Baker-Campbell-Hausdorff formula, these groups are called the osculating Lie groups. This family of groups is in general not locally trivial as the group structure might change abruptly at a given point. The analysis of the operators in the calculus requires a good understanding of the algebra of principal symbols. The ellipticity condition for instance (called Rockland condition in this context, after \cite{Rockland}) requires invertibility of the principal symbols in all the nontrivial irreducible unitary representations of the osculating groups. Understanding these representations can be done through Kirillov theory \cite{Kirillov}. This theory provides a homeomorphism between the unitary dual of a nilpotent Lie group with the Fell topology and the set of co-adjoint orbits with the quotient topology.

In this paper we assume that our groups have a specific structure, namely regular 2-step nilpotent groups to allow a good understanding of their unitary dual. Manifolds having all their osculating groups 2-step regular are called polycontact after \cite{VanErpPolycontact}. We show that the \(C^*\)-algebra of principal symbols (of order 0) on these manifolds can be constructed from a field of Toeplitz algebras.

A particular example of regular 2-step nilpotent groups are the groups of Heisenberg type introduced in \cite{Kaplan}. These groups can be constructed from representations of Clifford algebras, in particular, the moduli space of their isomorphism classes is discrete. Because of that, we show that if a polycontact manifold has all its osculating groups of Heisenberg type then they are all isomorphic and the osculating groups form a locally trivial group bundle. In particular, the symbol algebra also becomes a locally trivial bundle of \(C^*\)-algebras over \(M\). These manifolds include contact manifolds, for which the result can be found in the author's previous work \cite{CrenToeplitz}, as well as quaternionic contact or octonionic contact manifolds \cite{BiquardQuaternionicContact}, and the cocompact quotients of any group of Heisenberg type.

\section{Toeplitz algebra of a symplectic vector space}


Given a complex hermitian vector space \(W\), we can construct a Toeplitz \(C^*\)-algebra \(\mathcal{T}(W)\) in natural way. Now, given a symplectic vector space \((V,\omega)\), we can choose a complex structure \(J\colon V \to V\) compatible with the symplectic form. We have then two complex hermitian vector spaces (of the same real dimension as \(V\)), \(V^{1,0},V^{0,1}\). The two algebras \(\T(V^{1,0}), \T(V^{0,1})\) are opposed to one another and depend naturally on the triple \((V,\omega,J)\) with respect to unitary symplectomorphisms. Replacing \(J\) by \(-J\) corresponds to changing the orientation of the symplectic vector space (i.e. replacing \(\omega\) by \(-\omega\) to keep compatibility) so it swaps the two algebras.

We describe this \(C^*\)-algebra as a subalgebra of bounded operators on a specific Hilbert space, namely the symmetric Fock space. For more details see \cite{CrenToeplitz}.
With the same notations as above, consider a complex hermitian vector space \(W\). We then take the positive symmetric tensor powers \(\Sym^k(W)\) with the induced hermitian metric. The symmetric Fock space is then given by the \(\ell^2\)-completion:
\[\mathcal{F}^+(W) = \ell^2-\bigoplus^{\perp}_{k\geq 0}\Sym^k(W).\]
If \(w\in W\), it induces a shift operator on \(\F^+(W)\) by:
\[S_w = \Sym(w \otimes \cdot).\]
The Toeplitz algebra is then generated by these operators and their adjoints:
\[\T(W) = C^*(S_w|w\in W) \subset \mathcal{B}(\F^+(W)).\]
This means that we consider all (non-commutative) polynomials between \(S_w\)'s and their adjoints, as well as the limits of such operators. If we fix an orthonormal basis \(e_1,\cdots,e_n\), then the Toeplitz algebra is generated by \(S_{e_1},\cdots,S_{e_n}\) and their adjoints. The commutation relations between these operators are then easier to express. In particular they commute modulo the compact operators. Considering their joint spectrum, we obtain the exact sequence:
\begin{equation}\label{Equation: ToeplitzHerm}
    \xymatrix{0 \ar[r] & \mathcal{K}(\F^+(W))\ar[r] & \T(W) \ar[r] & \CCC(\mathbb{S}^*W) \ar[r] & 0.}
\end{equation}
The quotient map is realized as follows. A vector \(w \in W\), seen as an element of \((W^*)^*\) defines a linear form on \(W^*\). By homogeneity, we can see this map as defined on the sphere \(\mathbb{S
}^*W\). We can also take the adjoints of such operators (we get the complex conjugate function), products of such operators (so homogeneous polynomial functions on \(W^*\)) and sums (arbitrary polynomials). This corresponds to the quotient map in the above exact sequence.

\begin{rem}
    This symbolic description of the Toeplitz algebra is due to Arveson \cite{Arveson}.
    Another description of the Toeplitz algebra is to consider elements of \(\Sym^k(W)\) as homogeneous polynomials of degree \(k\) on \(W^*\). Then the symmetric Fock space corresponds to a weighted \(L^2\)-space of holomorphic functions on \(W^*\) (for the weight \(\exp(-|z|^2)\diff z\)). The operator \(S_w\) is then the multiplication by the monomial \(w\) (renormalized to make it a bounded operator). Multiplication by \(\bar{w}\) gives a function that is not holomorphic anymore, but still in the \(L^2\)-space. We can thus project onto the subspace of holomorphic functions (which is closed with this weight). This corresponds to the operator \(S_w^*\) (again, after some renormalisation). We can also identify this space with the one of holomorphic functions on the disk that extend continuously to the boundary, seen as a closed subspace \(H^2(\mathbb{S}^*W)\) of \(L^2(\mathbb{S}^*W)\). Then if \(S \colon L^2(\mathbb{S}^*W) \to H^2(\mathbb{S}^*W) \) is the orthogonal projector, the Toeplitz algebra is the algebra of operators in \(\mathcal{B}(H^2(\mathbb{S}^*W))\) of the form \(SM_f\). Here \(f \in \CCC(\mathbb{S}^*W)\) and \(M_f \in \mathcal{B}(L^2(\mathbb{S}^*W))\) is the corresponding multiplication operator.
\end{rem}

This algebra is compatible with unitary morphisms between the underlying vector spaces. Let \(W_1,W_2\) be two hermitian vector spaces, and \(T \colon W_1 \to W_2\) a unitary map. It induces a map between the symmetric tensor powers and hence between the symmetric Fock spaces:
\[\widetilde{T} \colon \F^+(W_1) \to \F^+(W_2)\]
which is also unitary.
We then have, for any \(w \in W\), the relation
\[\widetilde{T}\circ S_w = S_{T(w)}\circ \widetilde{T}.\]
Therefore the isomorphism \(\widetilde{T}\) intertwines the two Toeplitz algebras.

Consequently, if \(E \to M\) is a hermitian vector bundle, one can define a bundle of symmetric Fock spaces \(\F^+(E) \to M\). This is a locally trivial bundle of Hilbert spaces thanks to the unitary equivariance of the construction. We get corresponding bundle of Toeplitz algebras \((\T(E_m))_{m\in M}\) which is also locally trivial by the same remark. Denote by \(\T(E)\) the algebra of continuous sections of this bundle, vanishing at infinity if \(M\) is non-compact.
If \(X\in \Gamma_c(M,E)\) is a compactly supported section, we can build a section of \(\T(E)\) by \(S_X\), which on every fiber over \(m\in M\) takes the value \(S_{X(m)}\). The \(C^*\)-algebra \(\T(E)\) is generated by these sections.

Given a symplectic vector space \((V,\omega)\), we can take a compatible complex structure \(J \in \End(V)\). We get a hermitian vector space \(V^{1,0}\). We denote by 
\[\T(V,J) := \T(V^{1,0}),\]
the corresponding symmetric Fock space. The identification \(V \cong V^{1,0}\) identifies the respective (co)-spheres. This way we rewrite the exact sequence \ref{Equation: ToeplitzHerm} into:

\begin{equation}\label{Equation: ToeplitzSympl}
    \xymatrix{0 \ar[r] & \mathcal{K}(\F^+(V^{1,0}))\ar[r] & \T(V,J) \ar[r] & \CCC(\mathbb{S}^*V) \ar[r] & 0.}
\end{equation}

Similarly if \(H \to M\) is a symplectic vector bundle (i.e. there is a section \(\omega \in \Gamma(M,\Lambda^2H^*)\) which restricts to a symplectic form on each fiber). The space of compatible complex structures on a symplectic vector space is contractible. Therefore we may also choose a complex structure \(J\in \End(H)\) compatible with the symplectic form on each fiber. We then have a hermitian vector bundle \(H^{1,0} \to M\) and take the corresponding Toeplitz algebra (so the one given by sections of Toeplitz algebras over each fiber). Similarly as for symplectic vector spaces, we denote by
\[\T(H,J) := \T(H^{1,0}),\]
this Toeplitz algebra. The bundle of Hilbert spaces \(\F^+(H^{1,0})\) is a bundle of infinite dimensional, separable Hilbert spaces and is therefore trivializable. Consequently, the algebra of sections \(\Gamma_0(M,\mathcal{K}(\F^+(H^{1,0})))\) which is a continuous field of compact operators, is a trivial field. Hence the algebra is isomorphic to \(\CCC_0(M,\mathcal{K})\). Therefore, this Toeplitz algebra fits into the exact sequence:

\begin{equation}\label{Equation: ToeplitzSymplBundle}
    \xymatrix{0 \ar[r] & \CCC_0(M,\mathcal{K}) \ar[r] & \T(H,J) \ar[r] & \CCC(\mathbb{S}^*H) \ar[r] & 0.}
\end{equation}

\section{Regular 2-step nilpotent groups}


In this paper we will work with the following kind of groups:

\begin{mydef}
    A regular 2-step nilpotent Lie algebra is a graded Lie algebra \(\mathfrak{g} = \g_1 \oplus \g_2\) such that \(\g_2 = \mathfrak{z}(\g) = [\g_1,\g_1]\) and moreover, for any \(x \in \g_1\), the map \(\ad_x \colon \g_1 \to \g_2\) is surjective.
    A regular 2-step nilpotent group is a connected, simply connected group, whose Lie algebra is regular 2-step nilpotent Lie algebra.
\end{mydef}

These groups are also called polycontact groups in \cite{VanErpPolycontact} or H-groups in the sense of Métivier (and not to be confused with the H-type groups later on). This class of groups was introduced in \cite{Metivier}. It was shown than on this group, hypoellipticity and analytic hypoellipticity were equivalent for homogeneous differential operators (so both equivalent to the Rockland condition). Here we show that the regularity condition gives a nice structure to the space of coadjoint orbits.

\begin{prop}\label{Proposition:Regular 2-step Conditions}
    Let \(\g = \g_1 \oplus \g_2\) be a 2-step nilpotent Lie algebra. The following are equivalent:
    \begin{enumerate}
        \item \(\g\) is regular.
        \item For every \(\theta\in \g_2^*\setminus0\), the form \(\omega_{\theta} \in \Lambda^2 \g_1^*\) defined by \(\omega_{\theta} \colon (u,v) \mapsto \theta([u,v])\) is non-degenerate (i.e. \((\g_1,\omega_{\theta})\) is a symplectic vector space).
    \end{enumerate}
\end{prop}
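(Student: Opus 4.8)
The plan is to unwind both conditions into statements about the kernel of the skew-symmetric bilinear forms $\omega_\theta$ on $\g_1$ and to show that these two families of linear-algebraic conditions coincide. First I would fix the dictionary: for $\theta \in \g_2^*$ and $u \in \g_1$, the vector $\ad_u x$ (more precisely the element $[u,\cdot] \in \Hom(\g_1,\g_2)$ composed with $\theta$) satisfies $\theta([u,v]) = \omega_\theta(u,v)$, so $u$ lies in the radical $\ker \omega_\theta = \{u \in \g_1 : \omega_\theta(u,v) = 0 \ \forall v\}$ precisely when $\theta \circ \ad_u = 0$ on $\g_1$, i.e. when $\theta$ annihilates $\im(\ad_u) = [\g_1,\g_1] \cap \ldots$; since $\g$ is $2$-step with $[\g_1,\g_1] = \g_2$ (part of the standing hypothesis of the class, though note one must be a little careful: the \emph{definition} of "regular" bundles both $\g_2 = [\g_1,\g_1]$ \emph{and} surjectivity of each $\ad_x$, whereas the proposition only assumes $\g$ is $2$-step nilpotent — so I should first check whether $[\g_1,\g_1]=\g_2$ is being assumed throughout or needs to be derived; I will treat "$2$-step nilpotent graded" as including $\g_2 = \mathfrak z(\g) \supseteq [\g_1,\g_1]$ and handle the general case by restricting $\theta$ appropriately).

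The core of the argument is the equivalence, for each fixed $u \in \g_1$,
\[
  \ad_u \colon \g_1 \to \g_2 \text{ is surjective} \iff \bigl(\forall \theta \in \g_2^* \setminus 0,\ u \notin \ker \omega_\theta\bigr).
\]
For ($\Leftarrow$) of the proposition (non-degeneracy $\Rightarrow$ regular): suppose some $\ad_u$ is \emph{not} surjective; then $\im(\ad_u) \subsetneq \g_2$ is a proper subspace, so there is a nonzero $\theta \in \g_2^*$ vanishing on $\im(\ad_u)$, and then $\omega_\theta(u,v) = \theta([u,v]) = 0$ for all $v$, so $u \in \ker\omega_\theta$ and $\omega_\theta$ is degenerate — contrapositive done. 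For ($\Rightarrow$) (regular $\Rightarrow$ non-degeneracy): suppose $\omega_\theta$ is degenerate for some $\theta \neq 0$, i.e. there exists $u \neq 0$ with $\omega_\theta(u,\cdot) \equiv 0$; then $\theta$ annihilates $\im(\ad_u)$, so $\im(\ad_u) \subseteq \ker\theta \subsetneq \g_2$, contradicting surjectivity of $\ad_u$. Here I need $u \neq 0$, which is exactly the content of "degenerate": the radical is nonzero. (One subtlety: if $\dim\g_1$ is odd, every skew form is automatically degenerate, so regularity forces $\dim\g_1$ even; this is consistent, not a gap.)

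The main obstacle I anticipate is purely bookkeeping rather than conceptual: making sure the quantifier "$\forall x \in \g_1$" in the definition of regular matches up cleanly with "$\forall \theta \neq 0$" in condition (2) — the equivalence above is pointwise in $u$ and in $\theta$ via a duality (a vector $u$ is in \emph{some} radical iff $\ad_u$ hits \emph{some} hyperplane, i.e. is non-surjective), and one must quantify correctly on both sides. I would also spell out the trivial direction that condition (2) includes the requirement $\omega_\theta \neq 0$ (so that $[\g_1,\g_1]$ is not killed by any $\theta$, forcing $[\g_1,\g_1] = \g_2$ and recovering the first clause of the definition of regular), which closes the loop. No hard analysis or representation theory is needed; it is elementary linear algebra with the annihilator/radical duality, and the whole proof should be half a page.
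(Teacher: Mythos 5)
Your proof is correct and follows essentially the same route as the paper: both directions are the elementary annihilator/radical duality argument (a non-surjective \(\ad_x\) admits a nonzero \(\theta\) killing its image, giving \(x\in\ker\omega_\theta\); conversely a nonzero radical vector \(x\) for \(\omega_\theta\) forces \(\im(\ad_x)\subseteq\ker\theta\), contradicting surjectivity). Your extra bookkeeping about \(x\neq 0\) and about \((2)\) forcing \([\g_1,\g_1]=\g_2\) is a harmless tightening of points the paper leaves implicit.
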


\begin{proof}
    If \(\g\) is not regular consider \(x\in \g_1\) for which \(\ad_x \colon \g_1 \to \g_2\) is not surjective. Let \(\theta \in \g_2^*\setminus 0\) such that vanishing on \(\im(\ad_x)\), then \(\omega_{\theta}(x,\cdot) = 0\).

    If \(\g\) is regular assume there is a \(\theta \in \g_2^*\) for which \(\omega_{\theta}\) has a kernel. Let \(x \in \ker(\omega_{\theta})\). Let \(z \in \g_2\), by regularity, let \(y \in \g_1\) with \([x,y] = z\). Then \(\theta(z) = \omega_{\theta}(x,y) = 0\) so \(\theta = 0\) and \(g\) satisfies \((2)\).
\end{proof}

We now compute the coadjoint orbits of a regular 2-step graded group.

\begin{prop}
    Let \(G\) be a regular 2-step graded group. Its coadjoint orbits consist of:
    \begin{itemize}
        \item Affine subspaces of the form \(\g_1^* \oplus \{\theta\}, \theta \in\g_2^* \setminus0\).
        \item Points \(\{\eta\}, \eta \in \g_1^*\).
    \end{itemize}
    Consequently we get a homeomorphism \(\faktor{\g^*}{G} \cong (\g_2^*\setminus0) \sqcup \g_1^*\). The topology on the right hand side being the euclidean one on each component, the first being open and the second closed. A net in \(\g_2^*\setminus0\) converging to \(0\) converges to all the points in \(\g_1^*\).
\end{prop}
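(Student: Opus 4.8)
The plan is to compute the coadjoint action of $G$ explicitly, using that a $2$-step nilpotent group is exponential with $\ad_X^2=0$, to read off the orbits with the help of Proposition~\ref{Proposition:Regular 2-step Conditions}, and finally to describe the saturated open subsets of $\g^*$ in order to identify the quotient topology. Since $\g_2=[\g_1,\g_1]=\mathfrak z(\g)$, for $X=X_1+X_2\in\g_1\oplus\g_2$ we have $\ad_X(\g)\subseteq\g_2$ and $\ad_X(\g_2)=0$, so $\ad_X^2=0$, hence $\Ad_{\exp X}=\Id+\ad_X$ and $\Ad^*_{\exp X}=\Id-\ad_X^*$ (the sign is immaterial for the orbits). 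Splitting $\g^*=\g_1^*\oplus\g_2^*$ via the grading and writing $\xi=\eta+\theta$, for $Y=Y_1+Y_2$ only $[X_1,Y_1]\in\g_2$ contributes, so $(\ad_X^*\xi)(Y)=\theta([X_1,Y_1])=\omega_{\theta}(X_1,Y_1)$; that is, $\ad_X^*\xi=\phi_{\theta}(X_1)\in\g_1^*\subset\g^*$, where $\phi_{\theta}\colon\g_1\to\g_1^*$, $\phi_{\theta}(u)=\omega_{\theta}(u,\cdot)$. Thus
\[\Ad^*_{\exp X}(\eta+\theta)=\bigl(\eta-\phi_{\theta}(X_1)\bigr)+\theta,\]
so the $\g_2^*$-component is constant along orbits.

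It follows that the orbit of $\eta+\theta$ equals $\bigl(\eta+\im\phi_{\theta}\bigr)\oplus\{\theta\}$. If $\theta=0$ then $\phi_{\theta}=0$ and the orbit is the single point $\{\eta\}$. If $\theta\neq0$, Proposition~\ref{Proposition:Regular 2-step Conditions} says $\omega_{\theta}$ is non-degenerate, i.e.\ $\phi_{\theta}$ is injective, hence bijective since $\dim\g_1<\infty$; therefore $\im\phi_{\theta}=\g_1^*$ and the orbit is the whole affine subspace $\g_1^*\oplus\{\theta\}$. This gives the list of orbits and the set-theoretic bijection $\faktor{\g^*}{G}\cong(\g_2^*\setminus0)\sqcup\g_1^*$ sending the plane at level $\theta$ to $\theta$ and the point $\{\eta\}$ to $\eta$.

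For the topology, let $q\colon\g^*\to\faktor{\g^*}{G}$ be the quotient map. Under the bijection a subset corresponds to a pair $(A,B)$ with $A\subseteq\g_2^*\setminus0$ and $B\subseteq\g_1^*$, with $q$-preimage $(\g_1^*\times A)\cup(B\times\{0\})\subseteq\g_1^*\times\g_2^*$. Testing openness fiberwise, this set is open iff $A$ is Euclidean-open, $B$ is Euclidean-open, and whenever $B\neq\emptyset$ the set $A$ contains a punctured ball $\{0<|\theta|<\varepsilon\}$ (openness at a point $(\eta,0)$ with $\eta\in B$ forces a product neighbourhood, whose $\g_2^*$-slice minus $0$ must lie in $A$, and then saturation pulls in the whole plane at those levels). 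Consequently $\g_2^*\setminus0$ is an open subspace on which $q$ restricts to the open projection onto the $\g_2^*$-factor, hence a quotient map, so its topology is Euclidean; $\g_1^*$ is a closed subspace, and $q$ restricted to the closed saturated set $\g_1^*\times\{0\}$ (trivial action) is a homeomorphism onto its image, so its topology is Euclidean too; and the only extra identifications are the stated ones. Indeed, if $\theta_{\alpha}\in\g_2^*\setminus0$ with $\theta_{\alpha}\to0$ in the Euclidean sense, then $\theta_{\alpha}$ eventually lies in every punctured ball, hence in every neighbourhood of every $[\eta]\in\g_1^*$, so $\theta_{\alpha}\to[\eta]$ for all $\eta$; conversely no net in $\g_1^*$ converges into $\g_2^*\setminus0$, and a net $\theta_{\alpha}\to\theta_0\neq0$ does not converge to any point of $\g_1^*$, as one can choose neighbourhoods whose preimages avoid the relevant region. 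The open-set characterization also shows the bijection is a homeomorphism.

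I expect the coadjoint computation and the application of Proposition~\ref{Proposition:Regular 2-step Conditions} to be routine once the formula for $\Ad^*$ is in hand; the step demanding genuine care is the last one, namely pinning down the (non-Hausdorff) quotient topology precisely — verifying that the punctured-ball condition is exactly what openness of the saturated sets imposes, and that this yields precisely the asserted convergence behaviour and nothing more.
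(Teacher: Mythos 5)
Your proof is correct, and it differs from the paper's in two ways worth noting. For the orbit computation, the paper argues infinitesimally: it computes \(\ad^*_x(\eta,\theta)=(\ad^*_x\theta,0)\), observes that \(x\mapsto \ad^*_x\theta\) is the musical isomorphism of \(\omega_\theta\) (surjective by regularity), concludes the orbit through \((\eta,\theta)\) has tangent space \(\g_1^*\) at every point, and hence equals the affine plane \(\g_1^*\oplus\{\theta\}\) — an argument that implicitly uses connectedness of the orbit and the open/closed dichotomy. You instead compute the global action: since \(\ad_X^2=0\) and \(\exp\) is onto, \(\Ad^*_{\exp X}(\eta+\theta)=(\eta\mp\phi_\theta(X_1))+\theta\), so the orbit is literally \((\eta+\im\phi_\theta)\oplus\{\theta\}\), and regularity (via Proposition~\ref{Proposition:Regular 2-step Conditions}) makes \(\phi_\theta\) bijective; this yields the orbits with no submanifold or connectedness argument, at the cost of a (trivial) exponential computation. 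The second difference is that the paper stops at the orbit description and merely asserts the homeomorphism \(\faktor{\g^*}{G}\cong(\g_2^*\setminus0)\sqcup\g_1^*\) and the convergence behaviour, whereas you actually verify it: your characterization of the saturated open sets \((\g_1^*\times A)\cup(B\times\{0\})\) — \(A\), \(B\) Euclidean-open plus the punctured-ball condition when \(B\neq\emptyset\) — is exactly right, and it cleanly yields the Euclidean subspace topologies, the open/closed statements, and the fact that a net in \(\g_2^*\setminus0\) tending to \(0\) converges to every point of \(\g_1^*\). So your write-up is both correct and more complete than the paper's on the topological half of the statement.
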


\begin{proof}
    Let \(\eta \in \g_1^*, x \in \g\) then \(\ad^*_x(\eta) = 0\) so \(\forall g\in G, \Ad^*(g)(\eta) = \eta\) and \(\{\eta\}\) is a coadjoint orbit.

    Now consider \((\eta,\theta) \in \g^*, x \in \). We have \(\ad^*_x(\eta,\theta) = (\ad^*_x(\theta),0)\). The mapping \(x \mapsto \ad^*_x\theta\) from \(\g_1\) to its dual is the musical isomorphism induced by the form \(\omega_{\theta}\) and is in particular surjective. Therefore \(T_{(\eta,\theta)}\left(\Ad^*(G)\cdot(\eta,\theta)\right) = \g_1^*\). Consequently the coadjoint orbit going through \((\eta,\theta)\) is the affine space \(\g_1^* \oplus\{\theta\}\).
\end{proof}

\begin{rem}
    We see in the proof the importance of the regularity assumption. Without it we would still have the first kind of coadjoint orbits. However, the other kind of orbits would just be included in the affine spaces and not necessarily equal to them. These particular kind of orbits (called flat orbits) have been studied extensively since the work of Moore and Wolf \cite{MooreWolf}, as they were proved to correspond to square-integrable representations of the group (for any kind of nilpotent group).
\end{rem}

Now using Kirillov theory, we can make these coadjoint orbits correspond to representations of the group. Elements of \(\g_1^*\) are points so they represent characters of the group. In particular, \(0 \in \g_1^* \) corresponds to the trivial representation. The other representations, corresponding to some \(\theta \in \g_2^*\) are infinite dimensional.

\begin{cor}\label{Corollary: CStar Group is continuous field}
    Let \(G\) be a regular 2-step nilpotent group with Lie algebra \(\g = \g_1 \oplus \g_2\). Then \(C^*(G)\) is a continuous field of \(C^*\)-algebras over \(\g_2^*\). The fiber over a non-zero point is the algebra of compact operators over an infinite dimensional separable Hilbert space, \(\mathcal{K}\). The fiber over \(0\in \g_2^*\) is the commutative algebra \(\CCC_0(\g_1^*)\). The continuous field is trivial on \(\g_2^*\setminus 0\).
\end{cor}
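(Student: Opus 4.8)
The plan is to use the Kirillov correspondence together with the explicit description of the coadjoint orbit space from the previous proposition. Recall that $C^*(G)$ for a connected simply connected nilpotent Lie group is a CCR (liminal) algebra, and its spectrum $\widehat{G}$ is homeomorphic, via Kirillov theory, to the orbit space $\mathfrak{g}^*/G$ with the quotient topology. By the preceding proposition this space is $(\g_2^*\setminus 0) \sqcup \g_1^*$, where $\g_2^*\setminus 0$ is open and $\g_1^*$ is closed, and a net in $\g_2^*\setminus 0$ converging to $0$ converges to every point of $\g_1^*$. The first step is to reorganize this as a map $\widehat{G} \to \g_2^*$ (sending the orbit $\g_1^*\oplus\{\theta\}$ to $\theta$, and collapsing all of $\g_1^*$ to $0$); I claim this is continuous, open, and exhibits $\widehat{G}$ as a ``bundle'' whose fiber over $\theta\neq 0$ is a single point and whose fiber over $0$ is $\g_1^*$. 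Continuity over $\g_2^*\setminus 0$ is clear since there the orbit space is just $\g_2^*\setminus 0$; the key point is that the preimage of any neighborhood of $0$ is open, which follows from the convergence statement.

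The second step identifies the primitive ideal space and the fibers as $C^*$-algebras. The closed subset $\g_1^* \subset \widehat{G}$ corresponds to a quotient of $C^*(G)$; since $\g_1^*$ consists entirely of characters (one-dimensional representations), this quotient is the commutative algebra $\CCC_0(\widehat{G_{ab}}) = \CCC_0(\g_1^*)$, where $G_{ab} = G/[G,G] = \g_1$ (using that $[G,G]$ integrates $\g_2 = [\g_1,\g_1] = \mathfrak{z}(\g)$). The complementary open subset $\g_2^*\setminus 0$ corresponds to an ideal $I \trianglelefteq C^*(G)$ with $\widehat{I} \cong \g_2^*\setminus 0$; every point here is an infinite-dimensional irreducible representation, and since $G$ is CCR each such representation is on $\mathcal{K}$. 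To see that $I$ is actually a \emph{continuous field} of copies of $\mathcal{K}$ over $\g_2^*\setminus 0$, rather than just having the right spectrum, I would invoke the Dixmier--Douady / Fell theory: a separable $C^*$-algebra whose spectrum is a (nice, locally compact, Hausdorff) space $X$ and all of whose irreducible representations are infinite-dimensional and ``vary continuously'' is a continuous field of elementary $C^*$-algebras over $X$. The concrete way to establish this here is to produce the field directly via Kirillov's explicit parametrization: fixing a compatible complex structure on $\g_1$, the polarized representation $\pi_\theta$ acts on a fixed Fock-type space, and the matrix coefficients (equivalently, $\pi_\theta(f)$ for $f\in C_c^\infty(G)$) depend continuously --- indeed smoothly --- on $\theta$, which is exactly the datum of a continuous field structure.

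The third step upgrades this to the triviality statement on $\g_2^*\setminus 0$. A continuous field of copies of $\mathcal{K}$ over a space $X$ is classified by its Dixmier--Douady class in $H^3(X;\Z)$. Here $X = \g_2^*\setminus 0$ is homotopy equivalent to a sphere $S^{d-1}$ with $d = \dim \g_2$; its $H^3$ can be nonzero only when $d = 4$ (giving $H^3(S^3;\Z) = \Z$), so in general triviality is not automatic from cohomology alone. Instead I would argue triviality directly: Kirillov's construction gives an \emph{explicit} isomorphism of the field with the constant field, because the representation $\pi_\theta$ can be realized on one and the same Hilbert space $L^2$ of the Lagrangian (or the Fock space $\F^+$ of $\g_1^{1,0}$) for all $\theta\neq 0$, with $\pi_\theta(f)$ a norm-continuous family of compact operators. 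Concretely, rescaling $\theta$ by $\|\theta\|$ reduces to the unit sphere, and on the sphere one uses that the symplectic structures $\omega_\theta$ are all equivalent via a continuously-varying symplectomorphism (the space of linear symplectic structures on $\g_1$ is connected and the unitary/symplectic frame bundle argument trivializes it over the contractible... rather, one picks a continuous section). This is the step I expect to require the most care, and where I would lean on the arguments already developed in the author's earlier work \cite{CrenToeplitz} and in \cite{VanErpPolycontact} for the contact case; the subtlety is purely in the triviality, as the field structure itself and the fiber identifications are formal consequences of Kirillov theory and the CCR property.

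Finally, assembling the pieces: the extension $0 \to I \to C^*(G) \to \CCC_0(\g_1^*) \to 0$, with $I$ a (trivial) continuous field of $\mathcal{K}$'s over $\g_2^*\setminus 0$ and the quotient $\CCC_0(\g_1^*)$ sitting over $0\in\g_2^*$, is precisely the statement that $C^*(G)$ is a continuous field over $\g_2^*$ with the asserted fibers. Continuity of the field at $0$ is the content of the convergence statement in the previous proposition (the fact that representations $\pi_\theta$ with $\theta\to 0$ limit onto all the characters of $G$), which guarantees that sections of the field restrict continuously across $\theta = 0$.
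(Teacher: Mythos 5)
Your overall strategy matches the paper's: both arguments rest on the Kirillov picture of \(\widehat{G}\), on the continuous open map \(\widehat{G} \to \g_2^*\) that is the identity on \(\g_2^*\setminus 0\) and collapses the characters to \(0\), and, for triviality away from \(0\), on the norm continuity of \(\theta \mapsto \pi_{\theta}(f)\) for \(f \in \CCC^{\infty}_c(G)\) with the representations realized on a fixed Hilbert space (a point the paper also defers to its concrete description of the \(\pi_{\theta}\)). The difference is in the middle step: the paper obtains the continuous-field structure over \(\g_2^*\) in one stroke by applying Lee's theorem \cite[Theorem 4]{Lee} to that continuous open map, whereas you reassemble it by hand --- Fell's theory for the ideal over \(\g_2^*\setminus 0\), the abelianization \(\faktor{G}{[G,G]} \cong \g_1\) for the quotient \(\CCC_0(\g_1^*)\), and the convergence statement of the preceding proposition (every character is a limit of the \(\pi_{\theta}\), \(\theta \to 0\)) for lower semicontinuity of the norm functions at \(0\). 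Your route is more self-contained, but to be complete it should make the \(\CCC_0(\g_2^*)\)-algebra structure explicit (e.g.\ via \(C^*(\g_2) \cong \CCC_0(\g_2^*)\) sitting centrally in the multiplier algebra of \(C^*(G)\)), since that is what gives upper semicontinuity at \(0\); Lee's theorem packages exactly this.

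One claim in your triviality step is wrong as stated: in general there is \emph{no} single complex structure \(J\) on \(\g_1\) compatible with all the forms \(\omega_{\theta}\), \(\theta \in \mathbb{S}^*\g_2\) --- the existence of such a constant \(J\) is precisely the H-type condition analyzed later in the paper --- so you cannot realize all the \(\pi_{\theta}\) on one and the same Fock space in the way you describe, nor is a continuous global choice of Lagrangian automatic. The repair, consistent with what the paper does in the following sections, is to take a \(\theta\)-dependent family \(J_{\theta}\) of compatible complex structures (the space of compatible structures on each fiber is contractible, so a continuous family exists), observe that the resulting bundle of Fock spaces over \(\mathbb{S}^*\g_2\) is a bundle of separable infinite-dimensional Hilbert spaces and hence trivializable, and use the homogeneity \(\pi_{\lambda^2\theta} = \delta_{\lambda}^*\pi_{\theta}\) to handle the radial direction; norm continuity of \(\theta \mapsto \pi_{\theta}(f)\) in this trivialization then yields the trivial field of compacts. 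Your Dixmier--Douady detour is correctly flagged as inconclusive (the class could a priori be nonzero when \(\dim \g_2 = 4\)), so this direct argument is indeed the one that has to carry the triviality statement.
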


\begin{proof}
    The map \(\widehat{G} \to \g_2^*\) which is the identity on \(\g_2^*\setminus 0\) and sends the rest to \(0\) is continuous and open. The result is then a direct consequence of a theorem of Lee \cite[Theorem 4]{Lee}. For the triviality away from 0, we need to show that if \(f \in \CCC^{\infty}_c(G)\), the function \(\theta \in\g_2^*\setminus 0 \mapsto \pi_{\theta}(f) \in \mathcal{K}\) is continuous. This will be clear in the next section where we give a concrete description of these representations.
\end{proof}

\begin{cor}\label{Corollary: Extension CStar Group}
    Let \(G\) be a regular 2-step nilpotent group with Lie algebra \(\g = \g_1 \oplus \g_2\). Then we have the extension of the group \(C^*\)-algebra:
    \[\xymatrix{0 \ar[r] & \CCC_0(\g_2^*\setminus 0,\mathcal{K}) \ar[r] & C^*(G) \ar[r] & \CCC_0(\g_1^*) \ar[r] & 0.}\]
\end{cor}

\begin{proof}
    The quotient map is the evaluation at zero in the previous continuous field description of \(C^*(G)\). The kernel of this map is exactly the sections that vanish at \(0 \in \g_2^*\). The field of compact operators being trivial away from \(0\) we obtain the corresponding ideal.
\end{proof}

\section{Representation theory for regular 2-step nilpotent groups}


The representation theory of regular 2-step nilpotent groups was already described in the last section through Kirillov theory. In this section we give a more concrete description of the representations, particularly the infinite dimensional ones. This description will allow a precise description for the symbol algebra through Weyl operators in the next section, giving the link with Toeplitz operators.

The representations corresponding to \(\g_1^*\) are characters so we focus on the other ones, which are infinite dimensional. Let \(\theta \in \g^*_2\setminus 0\). We obtain a symplectic form \(\omega_{\theta}\) on \(\g_1\). If we choose a compatible complex structure \(J\), we obtain a complex hermitian vector space \((\g_1,J)^{1,0}\) and consider the corresponding symmetric Fock space \(\F^+_{\theta,J} := \F^+((\g_1,J)^{1,0})\).

We construct the representation \(\pi_{\theta}\) of the Lie algebra \(\g\) by anti-self-adjoint, unbounded operators on \(\F^+_{\theta,J}\). Remember that an irreducible representation has to map the center into scalar operators. In particular, it induces a linear form on \(\mathfrak{z}(\g) = \g_2\). This map is, following the construction of Kirillov, given by \(\theta \in \g_2\), i.e.
\[\pi_{\theta|\g_2} = \ii \theta .\] 
We then need to construct an operator for each element of \(\g_1\) such that their Lie bracket, which is an operator corresponding to an element of \(\g_2\), is one of the previous scalar ones.
Let \(X_1,\cdots,X_n,Y_1,\cdots,Y_n\) be a Darboux basis for \((\g_1,\omega_{\theta})\), compatible with \(J\), that is \(JX_j=-Y_j, JY_j = X_j, j\geq 1\). Consider then \(W_j = \frac{1}{\sqrt 2}(X_j+\ii Y_j)\). The vectors \(W_1,\cdots, W_n\) for an orthonormal basis for \((\g_1,J)^{1,0}\). Now the operators \(\pi_{\theta}(W_j), \pi_{\theta}(\overline{W_j})\) satisfy the canonical commutation relations:
\[[\pi_{\theta}(W_j),\pi_{\theta}(\overline{W_j})] = \pi_{\theta}([W_j,\overline{W_j}]) = -\ii^2\omega_{\theta}(X_j,Y_j)= 1.\]
Similarly, if \(j\neq k\) the operators \(\pi_{\theta}(W_j)\) commutes with \(\pi_{\theta}(W_k)\) and \(\pi_{\theta}(\overline{W_k})\).
Consider the creation and annihilation operators on the Fock space \(A_j^*,A_j, 1\leq j \leq n\). They are unbounded self adjoint operators defined by:
\begin{align*}
    A_j|\alpha\rag &= \sqrt{\alpha_j}|\alpha_1,\cdots,\alpha_j - 1, \cdots\alpha_n\rag \\
    A_j^*|\alpha\rag &= \sqrt{\alpha_j+1}|\alpha_1,\cdots,\alpha_j + 1, \cdots\alpha_n\rag.
\end{align*}
In these equations, \(\alpha \in \N^n\) is a multi-index and corresponds to the vector obtained by symmetrization of \(X_1^{\otimes \alpha_1} \otimes \cdots\otimes X_n^{\otimes \alpha_n}\). In this way, it can be seen as an unbounded version of the shifts defining the Toeplitz algebra (in the shift, some constant appear from the symmetrization making it a bounded operator). We then have:
\[\pi_{\theta}(W_j) = \ii A_j^*, \pi_{\theta}(\overline{W_j}) = \ii A_j.\]
This defines an irreducible representation of the Lie algebra \(\g\) that can be integrated as an irreducible, unitary representation \(\pi_{\theta} \in \widehat{G}\). The image of \(C^*(G)\) by this representation is known to be the compact operators on \(\F^+_{\theta,J}\).

Recall that \(\g\) is endowed with a family of inhomogeneous dilations \(\delta_{\lambda}, \lambda > 0\). They act by Lie algebra automorphisms on \(\g\) so they also act on \(G\) and \(\widehat{G}\).

On the characters it becomes the usual dilation on \(\g_1^*\). On the infinite dimensional representations, we have: 
\[\delta_{\lambda}^*\pi_{\theta} =\pi_{\tsp{}\delta_{\lambda}\theta} = \pi_{\lambda^2\theta}.\]
This can be seen the following way. We have \(\omega_{\lambda^2\theta} = \lambda^2\omega_{\theta}\) by construction. Therefore we can take the same complex structure for both symplectic forms and then if \(X_1,\cdots,X_n, Y_1,\cdots, Y_n\) is a Darboux basis for \(\omega_{\theta}\) then \(\lambda X_1,\cdots, \lambda X_n, \lambda Y_1,\cdots, \lambda Y_n\) is a Darboux basis for \(\omega_{\lambda^2\theta}\). 

\begin{rem}[Polar coordinates parameterization] 
Using this homogeneity, we can parameterize the representations in polar coordinates by \(\R^*_+ \times \mathbb{S}^*\g_2\). This requires a choice of metric on \(\g_2^*\) (or equivalently, directly on \(\g_2\)) to embed the sphere into \(\g_2^*\). This choice is a priori arbitrary. In the case of H-type groups however, such a metric comes naturally in the group's definition.

If we don't make such a choice the bundle \(\g_1 \times \mathbb{S}^*\g_2\) only has a conformal symplectic structure (over a point \(\theta\) we have the conformal class \((t\omega_{\theta})_{t> 0}\)). The choice of an embedding \(\mathbb{S}^*\g_2 \to \g_2^*\setminus 0\) allows to (continuously) fix a representative for each of these classes.
\end{rem}

\begin{rem}[Changing orientation]We can use a similar reasoning to compare \(\pi_{\theta}\) and \(\pi_{-\theta}\). For the symplectic forms we have \(\omega_{-\theta} = -\omega_{\theta}\). Therefore we have to take the opposite complex structure. The corresponding hermitian vector space is then the same but with conjugate complex structure. For the symmetric Fock spaces we get \(\F_{-\theta,-J} = \overline{\F_{\theta,J}}\) and the representations switches the roles of \(X_j\)'s and \(Y_j\)'s.
\end{rem}

\section{Toeplitz algebra of a regular 2-step nilpotent group}


Let \(G\) be a 2-step regular nilpotent group. We fix a metric on \(\g_2\) and \((J_{\theta})_{\theta \in \mathbb{S}^*\g_2}\) a smooth family of complex structures on \(\g_1\) compatible with the forms \(\omega_{\theta}, \theta\in \mathbb{S}^*\g_2\). As explained before, the compatibility condition does not depend on the choice of \(\theta\) up to multiplication by a positive scalar.

For each \(\theta\in \mathbb{S}^*\g_2\) we can then form the Toeplitz algebra \(\T(\g_1,J_{\theta})\). Choosing local trivializations of the complex bundle then allows to glue these algebras to a locally trivial bundle of \(C^*\)-algebras. Indeed the choice of \(J\) reduces the structural group of the symplectic bundle \(\mathbb{S}^*\g_2 \times \g_1\) from \(Sp(2n,\R)\) to \(U(n)\) (here \(2n = \dim_{\R}(\g_1)\)) and the Toeplitz algebra of a hermitian vector space is invariant under unitary transformations of that vector space. We denote by \(\T(\g,J)\) the \(C^*\)-algebra of continuous sections of that bundle. 

\begin{rem}
    If we replace \(J\) by another complex structure \(J'\), we get two complex structures on the bundle \(\mathbb{S}^*\g_2\times \g_1\). The two complex structures are compatible with the same symplectic structure so the corresponding complex vector bundles are homeomorphic. However, this homeomorphism might not preserve the symplectic structure. Indeed if it did, it would preserve the hermitian metrics induced by the symplectic form and the complex structures. Already at the level of a single symplectic vector space, we see that not all complex vector space isomorphisms are unitary. 
\end{rem}

On each fiber, we have the Toeplitz short exact sequence with ideal the compact operators and quotient continuous functions on the co-sphere of \(\g_1\). Although the hermitian bundle structure induced by \(J\) on \(\mathbb{S}^*\g_2\times \g_1\) is not trivial, the corresponding co-sphere bundle only depends on the real structure as we can obtain it as \(\mathbb{S}^*\g_2\times \left(\faktor{\g_1^*\setminus 0}{\R^*_+}\right) = \mathbb{S}^*\g_2\times \mathbb{S}^*\g_1\). We can therefore glue the quotient algebras and obtain \(\CCC(\mathbb{S}^*\g_2\times \mathbb{S}^*\g_1)\) seen as a continuous field of \(C^*\)-algebras over \(\mathbb{S}^*\g_2\). The corresponding ideal is a locally trivial bundle of compact operators on the bundle of symmetric Fock spaces of \((\mathcal{F}^+((\g_1,J_{\theta})^{1,0}))_{\theta\in \mathbb{S}^*\g_2}\). This bundle of Hilbert spaces is trivializable, providing an isomorphism between the ideal and \(\CCC(\mathbb{S}^*\g_2)\otimes\mathcal{K}\). This discussion gives the following result:

\begin{prop}
    We have the exact sequence:
    \[\xymatrix{0 \ar[r] & \CCC(\mathbb{S}^*\g_2)\otimes\mathcal{K} \ar[r] & \T(\g,J) \ar[r] & \CCC(\mathbb{S}^*\g_2\times \mathbb{S}^*\g_1) \ar[r] & 0}\]
\end{prop}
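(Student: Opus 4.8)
The plan is to assemble the exact sequence fiberwise from the Toeplitz sequence \eqref{Equation: ToeplitzSympl} and then glue over the base $\mathbb{S}^*\g_2$, exactly as in the discussion preceding the statement. First I would recall that for each fixed $\theta\in\mathbb{S}^*\g_2$ we have the short exact sequence
\[\xymatrix{0 \ar[r] & \mathcal{K}(\F^+((\g_1,J_\theta)^{1,0}))\ar[r] & \T(\g_1,J_\theta) \ar[r] & \CCC(\mathbb{S}^*\g_1) \ar[r] & 0,}\]
using the identification $\mathbb{S}^*(\g_1,J_\theta)^{1,0}\cong\mathbb{S}^*\g_1$, which depends only on the underlying real structure of $\g_1$ and not on $\theta$. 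The middle terms $\T(\g_1,J_\theta)$ are, by definition, the fibers of the locally trivial bundle of $C^*$-algebras whose section algebra is $\T(\g,J)$; the quotient terms are a constant field $\CCC(\mathbb{S}^*\g_1)$; and the ideal terms form a locally trivial field of elementary $C^*$-algebras.

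Next I would pass to section algebras. Since $\mathbb{S}^*\g_2$ is compact, taking continuous sections of a short exact sequence of (upper semi-)continuous fields of $C^*$-algebras over $\mathbb{S}^*\g_2$ is exact (the relevant point is that for fields of $C^*$-algebras over a compact base the section functor is exact — surjectivity of the induced quotient map follows from the existence of partitions of unity together with the standard $C^*$-algebraic lifting of norm-small sections). This yields
\[\xymatrix{0 \ar[r] & \Gamma(\mathbb{S}^*\g_2,(\mathcal{K}(\F^+((\g_1,J_\theta)^{1,0})))_\theta) \ar[r] & \T(\g,J) \ar[r] & \CCC(\mathbb{S}^*\g_2)\otimes\CCC(\mathbb{S}^*\g_1) \ar[r] & 0,}\]
where I use that the section algebra of the constant field with fiber $\CCC(\mathbb{S}^*\g_1)$ over $\mathbb{S}^*\g_2$ is $\CCC(\mathbb{S}^*\g_2\times\mathbb{S}^*\g_1)$.

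Finally I would identify the ideal. The family $(\F^+((\g_1,J_\theta)^{1,0}))_{\theta\in\mathbb{S}^*\g_2}$ is a locally trivial bundle of infinite-dimensional separable Hilbert spaces over the (paracompact, finite-dimensional) base $\mathbb{S}^*\g_2$; by Kuiper's theorem the unitary group of such a Hilbert space is contractible, so this bundle is globally trivializable. A trivialization induces an isomorphism of the field of compact operators with the trivial field, hence an isomorphism of section algebras $\Gamma(\mathbb{S}^*\g_2,(\mathcal{K})_\theta)\cong\CCC(\mathbb{S}^*\g_2)\otimes\mathcal{K}$. Substituting this into the sequence above gives the claimed
\[\xymatrix{0 \ar[r] & \CCC(\mathbb{S}^*\g_2)\otimes\mathcal{K} \ar[r] & \T(\g,J) \ar[r] & \CCC(\mathbb{S}^*\g_2\times \mathbb{S}^*\g_1) \ar[r] & 0.}\]

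The only genuinely non-formal point is the exactness of the section functor — specifically the surjectivity of the quotient map $\T(\g,J)\to\CCC(\mathbb{S}^*\g_2\times\mathbb{S}^*\g_1)$, i.e.\ lifting a global continuous section of the quotient field to a global continuous section of $\T(\g,J)$. This is handled by a standard partition-of-unity argument over $\mathbb{S}^*\g_2$: lift locally on trivializing charts using the fiberwise surjectivity in \eqref{Equation: ToeplitzSympl}, patch with a subordinate partition of unity, and correct the resulting small error by a Neumann-series / approximate-lift argument in the $C^*$-ideal; compactness of $\mathbb{S}^*\g_2$ keeps everything uniform. All the remaining steps (the identification of the quotient field as constant, Kuiper-triviality of the ideal field) have already been spelled out in the preceding paragraphs of this section, so the proof is essentially a matter of citing them in order.
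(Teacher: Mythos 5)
Your argument is correct and follows essentially the same route as the paper, which establishes the proposition precisely by gluing the fiberwise sequence \eqref{Equation: ToeplitzSympl}, observing that the quotient field is constant because the co-sphere $\mathbb{S}^*(\g_1,J_\theta)^{1,0}\cong\mathbb{S}^*\g_1$ depends only on the real structure, and trivializing the Fock-space bundle to identify the ideal with $\CCC(\mathbb{S}^*\g_2)\otimes\mathcal{K}$. The only minor remark is that your lifting step is simpler than you make it: since the quotient map is $\CCC(\mathbb{S}^*\g_2)$-linear, patching local lifts with a partition of unity already gives an exact lift, so no Neumann-series correction is needed.
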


Our goal is to relate this bundle of Toeplitz algebras to the \(C^*\)-algebra of the group. Ultimately this should yield a Morita equivalence between our algebra and \(C_0^*(G)\rtimes \R^*_+\) where \(C^*_0(G)\subset C^*(G)\) is the kernel of the trivial representation. This algebra has a similar short exact sequence but the quotient is \(\CCC(\mathbb{S}^*\g_1)\) so the algebra \(\T(\g,J)\) is too big. 

\begin{mydef}
    Let \(J\) be a compatible complex structure on the symplectic vector bundle \(\mathbb{S}^*\g_2\times \g_1\). We denote by \(\T_0(\g,J) \subset \T(\g,J)\) the subalgebra of elements for which the image in the quotient \(\CCC(\mathbb{S}^*\g_2\times \mathbb{S}^*\g_1)\) is constant on the \(\mathbb{S}^*\g_2\) direction.
\end{mydef}

Let us give a more concrete description of this subalgebra. Each element of \(X\in\g_1\) provides a continuous section of the complex vector bundle \((\mathbb{S}^*\g_2 \times (\g_1, J)^{1,0})\) by 
\[X_c \colon \theta \mapsto X -\ii J_{\theta}X.\] 
From this, we create a family of shifts \(\theta\mapsto S_X(\theta) \in \T(\g_1,\theta)\). By construction, their image on the quotient is the constant map \(\theta \mapsto X\) where \(X\in \g_1 = (\g_1^*)^*\) is seen as a homogeneous function, hence a function on the co-sphere \(\mathbb{S}^*\g_1\). If we only consider these shifts however, we will only get constant sections of the bundle of compact operators. Therefore we obtain the following result:

\begin{prop}
    With the same notations as before \(\T_0(\g,J)\) is the subalgebra of sections of \(\T(\g,J)\) generated by the sections \(S_X, X\in \g_1\) and sections of the bundle of \(C^*\)-algebras \(\left(\mathcal{K}\left(\mathcal{F}^+((\g_1,J_{\theta})^{1,0})\right)\right)_{\theta\in \mathbb{S}^*\g_2}\).
\end{prop}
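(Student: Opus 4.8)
The plan is to show a double inclusion between $\T_0(\g,J)$ and the $C^*$-algebra $\A$ generated by the sections $S_X$ ($X\in\g_1$) together with $\Gamma_0\bigl(\mathbb{S}^*\g_2,\mathcal{K}(\F^+((\g_1,J_\theta)^{1,0}))\bigr)$. For the inclusion $\A\subset\T_0(\g,J)$: each $S_X$ is a continuous section of $\T(\g,J)$ because $X_c\colon\theta\mapsto X-\ii J_\theta X$ is a continuous section of the Hermitian bundle and $\theta\mapsto S_{X_c(\theta)}$ is continuous by the unitary equivariance of the Toeplitz construction; its image in the quotient $\CCC(\mathbb{S}^*\g_2\times\mathbb{S}^*\g_1)$ is the constant-in-$\theta$ function $\theta\mapsto X$, as recalled just before the statement. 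The compact sections obviously map to $0$ in the quotient, which is in particular constant in the $\mathbb{S}^*\g_2$ direction. Since $\T_0(\g,J)$ is by definition a $C^*$-subalgebra of $\T(\g,J)$ and the condition ``image in the quotient is constant in the $\mathbb{S}^*\g_2$-direction'' is preserved under sums, products, adjoints and norm limits, $\A\subset\T_0(\g,J)$.

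For the reverse inclusion $\T_0(\g,J)\subset\A$, I would use the exact sequence from the preceding proposition,
\[\xymatrix{0 \ar[r] & \CCC(\mathbb{S}^*\g_2)\otimes\mathcal{K} \ar[r] & \T(\g,J) \ar[r]^-{\sigma} & \CCC(\mathbb{S}^*\g_2\times \mathbb{S}^*\g_1) \ar[r] & 0,}\]
and restrict it to $\T_0(\g,J)$. The restriction of $\sigma$ to $\T_0(\g,J)$ has image exactly $\CCC(\mathbb{S}^*\g_1)$, embedded as the functions constant in the first variable (surjectivity onto $\CCC(\mathbb{S}^*\g_1)$ follows because polynomials in the coordinate functions $X\in\g_1$ and their conjugates are dense, and these are the images of polynomials in the $S_X$ and $S_X^*$). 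Its kernel is $\CCC(\mathbb{S}^*\g_2)\otimes\mathcal{K}$, which lies in $\A$ by construction. Now take $T\in\T_0(\g,J)$. Its symbol $\sigma(T)\in\CCC(\mathbb{S}^*\g_1)$ can be uniformly approximated by polynomials $p$ in the coordinate functions and their conjugates; lifting each monomial to the corresponding word in the $S_X,S_X^*$ produces an element $P\in\A$ with $\sigma(P)=p$, and since $\A$ contains the ideal $\CCC(\mathbb{S}^*\g_2)\otimes\mathcal{K}=\ker\sigma$, a standard approximation argument (choose $P$ with $\|\sigma(T-P)\|$ small, correct by an element of the ideal to reduce $\|T-P\|$, iterate) shows $T\in\A$. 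Hence $\T_0(\g,J)=\A$, which is the assertion.

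The step I expect to require the most care is not the algebra but the continuity claims: verifying that $\theta\mapsto S_{X_c(\theta)}$ is a genuine continuous section of the locally trivial Toeplitz bundle $(\T(\g_1,J_\theta))_\theta$ — i.e. that in any local unitary trivialization of $(\g_1,J_\theta)^{1,0}$ the vector $X_c(\theta)$ varies continuously, which rests on the smoothness of the chosen family $(J_\theta)$ — and, dually, that the symbol map $\sigma$ restricted to $\T_0(\g,J)$ really has image the ``constant in $\mathbb{S}^*\g_2$'' copy of $\CCC(\mathbb{S}^*\g_1)$ with kernel the full field of compacts, so that the above lifting is available. Once these structural facts are in place, the proof is the routine $C^*$-algebra argument of lifting the quotient and absorbing the ideal sketched above.
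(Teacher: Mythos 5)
Your proof is correct and follows essentially the same route as the paper: show the generated algebra lies in \(\T_0(\g,J)\) (constant symbols for the \(S_X\), zero symbols for the compact sections), then conclude equality because it contains the full ideal \(\CCC(\mathbb{S}^*\g_2)\otimes\mathcal{K}\) and maps onto the same quotient \(\CCC(\mathbb{S}^*\g_1)\). The paper compresses this last step into ``same ideal and quotient, hence equal,'' which your Stone--Weierstrass plus lift-and-correct argument simply makes explicit.
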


\begin{proof}
    The second subalgebra is a subalgebra of \(\T_0(\g,J)\). For elements of the form \(S_X, X\in \g_1\) this results from the discussion above. Sections of the bundle of compact operators have zero image in the quotient, in particular it is constant. Now this second subalgebra have the same ideal and quotient so it is equal to \(\T_0(\g,J)\).
\end{proof}

\begin{ex}
    If \((V,\omega)\) is a symplectic vector space, let \(\heis(V,\omega)\) be the corresponding Heisenberg Lie algebra obtained by seeing \(\omega\) as a 2-cocycle on the abelian group \(V\). Then if \(J \colon V \to V\) is a complex structure compatible with \(\omega\), we have:
    \[\T_0(\heis(V,\omega), J) = \T(V^{1,0})\bigoplus_{\CCC(\mathbb{S}^*V)}\T(V^{0,1}).\]
    This is indeed the Toeplitz algebra considered in \cite{CrenToeplitz}.
\end{ex}

Remember that the algebra \(\T_0(\g,J)\) acts on the bundle \(\F^+((\g_1,J)^{1,0})\). On this bundle, we can define a number operator \(N\). This is an unbounded operator which, on the subbundle \(\Sym^k((\g_1,J)^{1,0}), k \in \N,\) is given by multiplication by \(k\). We can consider the complex powers \(N^{\ii t/2}, t\in \R\) which then form a continuous family of unitary operators.
We can consider the conjugation \(\Ad(N^{\ii t/2})\) as a group of automorphisms of \(\mathcal{B}(\F^+((\g_1,J)^{1,0}))\). This action of \(\R\), that we denote by \(\alpha\), restricts fiberwise. It is proved in \cite[Corollary 4.2.1]{CrenToeplitz} that it preserves the Toeplitz algebra, and is trivial on the quotient. Therefore we have an action \(\R \act \T(\g,J)\). Since the action is trivial on the quotient, then sections that have constant image in the quotients are also preserved so we also get an action \(\R \act \T_0(\g,J)\). The key relation is that for \(X \in \Gamma((\g_1,J)^{1,0})\), then:
\[\alpha_t(S_X) \equiv S_X \mod \CCC(\mathbb{S}^*\g_2,\mathcal{K}).\]

\begin{prop}
    We have the exact sequence:
    \[\xymatrix{0 \ar[r] & \CCC_0(\g_2^*\setminus \{0\})\otimes \mathcal{K} \ar[r] & \T_0(\g,J)\rtimes_{\alpha} \R \ar[r] & \CCC_0(\g_1^*\setminus \{0\}) \ar[r] & 0}.\]
\end{prop}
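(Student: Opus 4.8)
The plan is to compute the crossed product $\T_0(\g,J)\rtimes_\alpha\R$ by passing to crossed products in the short exact sequence
\[\xymatrix{0 \ar[r] & \CCC(\mathbb{S}^*\g_2)\otimes\mathcal{K} \ar[r] & \T_0(\g,J) \ar[r] & \CCC(\mathbb{S}^*\g_1) \ar[r] & 0},\]
which is the defining sequence for $\T_0(\g,J)$ (the quotient being the ``constant in the $\mathbb{S}^*\g_2$ direction'' functions, i.e.\ $\CCC(\mathbb{S}^*\g_1)$, tensored trivially over $\CCC(\mathbb{S}^*\g_2)$; one should be slightly careful and keep the $\CCC(\mathbb{S}^*\g_2)$ factor, writing the quotient as $\CCC(\mathbb{S}^*\g_2)\otimes\CCC(\mathbb{S}^*\g_1)$, because the ideal still fibers over $\mathbb{S}^*\g_2$). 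Since taking crossed products by the (amenable, hence exact) group $\R$ is exact, we obtain
\[\xymatrix{0 \ar[r] & (\CCC(\mathbb{S}^*\g_2)\otimes\mathcal{K})\rtimes_\alpha\R \ar[r] & \T_0(\g,J)\rtimes_\alpha\R \ar[r] & \CCC(\mathbb{S}^*\g_1)\rtimes_{\mathrm{triv}}\R \ar[r] & 0}.\]
Because $\alpha$ is trivial on the quotient, the right-hand term is simply $\CCC(\mathbb{S}^*\g_1)\otimes C^*(\R)\cong \CCC(\mathbb{S}^*\g_1)\otimes\CCC_0(\R)$, which under the identification $\mathbb{S}^*\g_1\times\R_{>0}\cong \g_1^*\setminus\{0\}$ (polar coordinates, using the Fourier transform $C^*(\R)\cong\CCC_0(\R)$ to turn the $\R$-factor into the radial variable) becomes $\CCC_0(\g_1^*\setminus\{0\})$. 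This identifies the quotient.

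For the ideal, the first step is to understand $\alpha$ restricted to $\CCC(\mathbb{S}^*\g_2)\otimes\mathcal{K}$. Fiberwise over $\theta\in\mathbb{S}^*\g_2$, $\alpha_t$ is $\Ad(N^{\ii t/2})$ acting on $\mathcal{K}(\F^+_{\theta,J})$, and the key point — which should be quoted from \cite{CrenToeplitz} or re-derived — is that this is (exterior-equivalent to) the dual action making $\mathcal{K}(\F^+_{\theta,J})\rtimes_\alpha\R$ Morita equivalent to $\CCC_0(\R)$, or more precisely that $\Ad(N^{\ii t/2})$ is unitarily implemented and so $\mathcal{K}\rtimes_\alpha\R\cong\mathcal{K}\otimes C^*(\R)\cong\mathcal{K}\otimes\CCC_0(\R)$; but one must track the $\theta$-dependence of this trivialization carefully, since the spectral data of $N$ (the ``radial'' variable dual to $t$) rescales as $\theta$ varies, exactly as in the dilation computation $\delta_\lambda^*\pi_\theta=\pi_{\lambda^2\theta}$ recorded above. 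Carrying the family over $\mathbb{S}^*\g_2$ and again converting the $\R$-direction into a radial coordinate via Fourier transform, one gets $(\CCC(\mathbb{S}^*\g_2)\otimes\mathcal{K})\rtimes_\alpha\R\cong\CCC(\mathbb{S}^*\g_2)\otimes\mathcal{K}\otimes\CCC_0(\R_{>0})\cong\CCC_0(\g_2^*\setminus\{0\})\otimes\mathcal{K}$, using polar coordinates $\mathbb{S}^*\g_2\times\R_{>0}\cong\g_2^*\setminus\{0\}$. This identifies the ideal.

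Finally one must check that the resulting six-term data assembles into precisely the claimed extension — i.e.\ that no extra extension class or reindexing sneaks in — by comparing with the extension of $C^*(G)$ from Corollary~\ref{Corollary: Extension CStar Group}; indeed the whole point is that $\T_0(\g,J)\rtimes_\alpha\R$ should be (Morita equivalent to, and here even isomorphic to) $C_0^*(G)\rtimes\R_{>0}$, whose extension is obtained from $\xymatrix{0 \ar[r] & \CCC_0(\g_2^*\setminus\{0\})\otimes\mathcal{K} \ar[r] & C^*(G) \ar[r] & \CCC_0(\g_1^*) \ar[r] & 0}$ by cutting down to $C_0^*(G)$ and crossing with the dilation action. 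I expect the main obstacle to be exactly the bookkeeping in the second step: making the fiberwise Morita/unitary equivalence $\mathcal{K}\rtimes_\alpha\R\simeq\CCC_0(\R)$ depend continuously on $\theta\in\mathbb{S}^*\g_2$ and matching the dual-action variable with the radial coordinate in $\g_2^*$, so that the two polar-coordinate identifications (for the ideal and for the quotient) are mutually compatible and reproduce the stated boundary map. The exactness of $\rtimes\R$ and the Fourier transform $C^*(\R)\cong\CCC_0(\R)$ are the only nontrivial external inputs, and both are standard.
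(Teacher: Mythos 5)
Your argument is correct and is essentially the paper's own proof: cross the defining extension \(0\to\CCC(\mathbb{S}^*\g_2)\otimes\mathcal{K}\to\T_0(\g,J)\to\CCC(\mathbb{S}^*\g_1)\to 0\) with \(\R\), identify the quotient using triviality of \(\alpha\) there together with \(C^*(\R)\cong\CCC_0(\R^*_+)\) and polar coordinates, and identify the ideal using that \(\alpha\) is inner there, being implemented by the unitary multipliers \(N^{\ii t/2}\) of \(\CCC(\mathbb{S}^*\g_2,\mathcal{K})\) --- which also disposes of your worry about \(\theta\)-dependence, since \(N\) has spectrum \(\N\) in every fiber and is a single globally defined multiplier of the ideal. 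One small correction: your parenthetical suggestion to write the quotient of \(\T_0(\g,J)\) as \(\CCC(\mathbb{S}^*\g_2)\otimes\CCC(\mathbb{S}^*\g_1)\) is wrong (that is the quotient of \(\T(\g,J)\), and would lead to \(\CCC_0(\mathbb{S}^*\g_2\times(\g_1^*\setminus\{0\}))\) instead of the claimed quotient); the quotient of \(\T_0(\g,J)\) is exactly \(\CCC(\mathbb{S}^*\g_1)\), which is what your displayed sequence correctly uses, so the aside should simply be deleted.
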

\begin{proof}
    As said before the action is trivial on the quotient so:
    \begin{align*}
        \CCC(\mathbb{S}^*\g_1)\rtimes_{\alpha}\R &\cong \CCC(\mathbb{S}^*\g_1) \otimes C^*(\R) \\
                                                 &\cong \CCC(\mathbb{S}^*\g_1) \otimes \CCC_0(\R^*_+) \\
                                                 &\cong \CCC_0(\g_1^*\setminus\{0\}).
    \end{align*}
    For the ideal, notice that \(N^{\ii t/2}\) is a multiplier of \(\CCC(\mathbb{S}^*\g_2,\mathcal{K})\). The crossed product is then isomorphic to a trivial one and we get the result by similar computations.
\end{proof}

The exact sequence of that proposition looks like the one of Corollary \ref{Corollary: Extension CStar Group} when the trivial representation (which corresponds to \(0 \in \g^*_1\)) is removed. If we denote by \(C^*_0(G) \triangleleft C^*(G)\) the kernel of the trivial representation, we should expect an isomorphism \(\T_0(\g,J) \rtimes_{\alpha}\R \cong C^*_0(G)\). This is proved in \cite{CrenToeplitz} when \(G\) is a Heisenberg group. We generalize this result to all the 2-step regular nilpotent groups. We do this by reducing to the case of Heisenberg groups (but for families).

For a symplectic vector space \((V,\omega)\), recall that \(C^*(\Heis(V,\omega))\) is a continuous field of \(C^*\)-algebras over \(\R\). If we fix a compatible complex structure \(J\), the fiber at \(t \in \R\) is identified with \(\mathcal{K}(\F^+((V,\mathrm{sign(t)}J)^{1,0}))\) if \(t \neq 0\) and \(\CCC_0(V^*)\) if \(t = 0\). Denote by \(I_+(V,\omega)\) the quotient of \(C^*(\Heis(V,\omega))\) by the sections vanishing for negative values (this corresponds to restricting the sections from \(\R\) to \(\R_+\)).

Because \(G\) is regular, we have a symplectic bundle \(\g_1 \times \mathbb{S}^*\g_2 \to \mathbb{S}^*\g_2\) with the forms \(\omega_{\theta}, \theta \in \mathbb{S}^*\g_2\). Consider the  bundle of \(C^*\)-algebras \((I_+(\g_1,\omega_{\theta}))_{\theta\in \mathbb{S}^*\g_2}\) and denote by \(\widetilde{C}^*(G)\) the algebra of continuous sections of this bundle. We have the exact sequence:
\[\xymatrix{0 \ar[r] & \CCC_0(\g_2^*\setminus 0,\mathcal{K}) \ar[r] & \widetilde{C}^*(G) \ar[r] & \CCC_0(\g_1^*\times \mathbb{S}^*\g_2) \ar[r] & 0.}\]
Here the quotient should be seen as \(\CCC(\mathbb{S}^*\g_2, \CCC_0(\g_1^*))\). It corresponds to the field of quotients corresponding to the characters for each group.

Consider \(\widetilde{C}^*_0(G)\subset \widetilde{C}^*(G)\) to be the subalgebra of sections of that vanish at the points \((0_{\R},0_{\g_1^*})\) (i.e. whose image in the quotient in the above exact sequence, belongs to \(\CCC(\mathbb{S}^*\g_2, \CCC_0(\g_1^*\setminus0))\)). 

\begin{thm}
    The algebra \(C^*(G)\) is isomorphic to the subalgebra of \(\widetilde{C}^*(G)\) corresponding to sections \(F \in C^*(G)\) for which the images in the commutative quotients \(F({\theta})(0)\in \CCC_0(\g_1^*)\) do not depend on \(\theta \in \mathbb{S}^*\g_2\).
    
    Under this identification, the algebra \(C_0^*(G)\) corresponds to the elements in the image of \(C^*(G)\) that are also in \(\widetilde{C}^*_0(G)\).
\end{thm}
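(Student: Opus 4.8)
The plan is to compare $C^*(G)$ with $\widetilde{C}^*(G)$ via their Kirillov pictures, using the fact that both are continuous fields whose fibers are controlled by the representation theory developed in the previous sections. The key observation is that, after the polar coordinates parameterization, the unitary dual $\widehat{G}$ decomposes as $(\R^*_+ \times \mathbb{S}^*\g_2)$ for the infinite-dimensional part together with $\g_1^*$ for the characters, while the dual of $\Heis(\g_1,\omega_\theta)$ restricted to $\R_+$ gives exactly $\R^*_+ \sqcup \g_1^*$ (the positive ray of the central character together with the characters). So fiberwise over $\mathbb{S}^*\g_2$, the algebra $I_+(\g_1,\omega_\theta)$ has the \emph{same} infinite-dimensional representations as $C^*(G)$ restricted to the ray through $\theta$, and additionally a full copy of $\CCC_0(\g_1^*)$ coming from the characters of the Heisenberg group at the central parameter $t=0$; whereas $C^*(G)$ has only \emph{one} copy of $\CCC_0(\g_1^*)$ shared across all $\theta$. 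This is precisely the discrepancy that the subalgebra condition ``$F(\theta)(0)$ does not depend on $\theta$'' is designed to remove.

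First I would make the map explicit. Given $f \in C^*(G)$, for each $\theta \in \mathbb{S}^*\g_2$ and each $t > 0$ set the value of the section at $(\theta,t)$ to be $\pi_{t^2\theta}(f)$, acting on $\F^+_{\theta,J_\theta}$ (using the homogeneity relation $\delta_\lambda^*\pi_\theta = \pi_{\lambda^2\theta}$ from the previous section to identify all these Fock spaces with a fixed one), and for $t=0$ set it to be the image of $f$ in the commutative quotient $\CCC_0(\g_1^*)$, viewed inside the $\theta$-fiber. I must check: (i) this assignment is a $*$-homomorphism into each fiber $I_+(\g_1,\omega_\theta)$ — this is just the identification of the Kirillov picture of $I_+(\g_1,\omega_\theta)$ with the restriction of $\widehat{G}$ to the ray $\R_+\cdot\theta$, together with the behavior at the boundary $t\to 0$; (ii) the resulting section is continuous in $\theta$ — here I invoke the concrete description of the representations $\pi_\theta$ through the smooth family $J_\theta$ and creation/annihilation operators, exactly as promised in Corollary~\ref{Corollary: CStar Group is continuous field}, so that for $f\in \CCC^\infty_c(G)$ the map $\theta\mapsto \pi_\theta(f)$ is norm-continuous, and extend by density; (iii) by construction $f(\theta)(0)$ is independent of $\theta$, so the image lands in the claimed subalgebra. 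Injectivity is immediate since $C^*(G)$ is already a continuous field over $\g_2^*$ and we are recording all its fibers.

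For surjectivity onto the stated subalgebra, I would argue by a diagram chase between the two short exact sequences: the one for $\widetilde{C}^*(G)$ with ideal $\CCC_0(\g_2^*\setminus 0,\mathcal{K})$ and quotient $\CCC(\mathbb{S}^*\g_2,\CCC_0(\g_1^*))$, and the one for $C^*(G)$ from Corollary~\ref{Corollary: Extension CStar Group} with the same ideal and quotient $\CCC_0(\g_1^*)$. The map I constructed restricts to the identity on the common ideal $\CCC_0(\g_2^*\setminus 0,\mathcal{K})$ (the infinite-dimensional representations are literally the same), and on quotients it is the diagonal inclusion $\CCC_0(\g_1^*)\hookrightarrow \CCC(\mathbb{S}^*\g_2,\CCC_0(\g_1^*))$ of constant sections, whose image is exactly the ``constant in $\theta$'' subalgebra. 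The five lemma (in the form: an ideal-preserving morphism that is an isomorphism on ideals and on the relevant sub-quotients is an isomorphism onto the corresponding subalgebra) then identifies $C^*(G)$ with $\{F : F(\theta)(0) \text{ independent of }\theta\}$. Finally, for the statement about $C^*_0(G)$: under the isomorphism, $C^*_0(G)$ is the kernel of the trivial representation, i.e. the sections whose image in $\CCC_0(\g_1^*)$ vanishes at $0\in\g_1^*$; tracing through, this says precisely that the common value $F(\theta)(0)\in\CCC_0(\g_1^*)$ vanishes at $0\in \g_1^*$, which since it is also the condition defining $\widetilde{C}^*_0(G)$ (vanishing at the points $(0_\R, 0_{\g_1^*})$) means $C^*_0(G)$ is the intersection of the image of $C^*(G)$ with $\widetilde{C}^*_0(G)$.

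The main obstacle I anticipate is step (ii), the continuity of $\theta\mapsto \pi_\theta(f)$ across the whole sphere and its compatibility with the field structure of $\widetilde{C}^*(G)$: one must be careful that the trivializations used to build $\widetilde{C}^*(G)$ (gluing the $I_+(\g_1,\omega_\theta)$) are compatible with the trivializations coming from the smooth family $J_\theta$, and that the limiting behavior as $t\to 0^+$ in each fiber genuinely produces the quotient map onto $\CCC_0(\g_1^*)$ rather than something twisted by $\theta$ — this is where the regularity hypothesis (Proposition~\ref{Proposition:Regular 2-step Conditions}) and the explicit Darboux-basis construction of $\pi_\theta$ do the real work, and I would treat it carefully rather than as routine.
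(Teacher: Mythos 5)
Your proposal is correct and takes essentially the same route as the paper: the paper's proof is exactly the restriction-along-rays map \(F(\theta)(t)=f(t\theta)\) on the continuous field from Corollary \ref{Corollary: CStar Group is continuous field}, with the observation that the commutative quotient value is \(f(0)\), independent of \(\theta\), and the converse statement. Your additional checks (norm-continuity in \(\theta\) via the explicit representations, and the diagram chase against the two extensions for surjectivity and for the \(C^*_0(G)\) statement) are precisely the details the paper leaves implicit; the only cosmetic difference is your \(t^2\)-parameterization of the ray versus the paper's linear one, which changes nothing.
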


\begin{proof}
    Consider \(f \in C^*(G)\). We see it a section of a continuous field of \(C^*\)-algebras over \(\g_2^*\) as in Corollary \ref{Corollary: CStar Group is continuous field}. Then to \(\theta \in \mathbb{S}^*\g_2\) we map the restriction of \(f\) to \(\R_+\theta\):
    \[F(\theta)(t) = f(t\theta), t\geq 0.\]
    This provides an element \(F(\theta) \in I_+(\g_1,\omega_{\theta})\) with a continuous dependence in \(\theta\). The image in the commutative quotients for each \(\theta\) is then given by \(f(0)\) which doesn't depend on \(\theta\). 

    This mapping is a continuous \(*\)-algebra morphism \(C^*(G) \to \widetilde{C}^*(G)\). Conversely, every element of \(F\) which has constant commutative quotients is of this form.
\end{proof}

\begin{thm}\label{Theorem: Toeplitz to Group}
    Let \(G\) be a regular 2-step nilpotent group. Let \(J\) be a compatible complex structure on the symplectic vector bundle \(\mathbb{S}^*\g_2\times \g_1\). We have an isomorphism:
    \[\T(\g,J) \rtimes_{\alpha} \R \isomto \widetilde{C}_0^*(G).\]
    This identification induces an isomorphism:
    \[\T_0(\g,J)\rtimes_{\alpha} \R \cong C^*_0(G).\]
    The last isomorphism is \(\R^*_+\)-equivariant for the dual action on the left and \(\delta^*\) on the right.
\end{thm}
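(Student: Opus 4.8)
The plan is to reduce the statement to the fibered Heisenberg case, which was already handled in \cite{CrenToeplitz}, and then to glue the fiberwise isomorphisms along $\mathbb{S}^*\g_2$, checking compatibility with the quotient maps. First I would fix the compatible complex structure $J$ and recall that, because $G$ is regular, Proposition~\ref{Proposition:Regular 2-step Conditions} gives a genuine symplectic vector bundle $\g_1\times\mathbb{S}^*\g_2\to\mathbb{S}^*\g_2$ with forms $\omega_\theta$. Over each $\theta$ the algebra $\T(\g_1,J_\theta)$ is the Toeplitz algebra of the symplectic vector space $(\g_1,\omega_\theta)$ with its complex structure, and the $\R$-action $\alpha$ restricts fiberwise to the action studied in loc.\ cit. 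The first step is therefore to invoke the Heisenberg-group case: for each fixed $\theta$, there is an isomorphism $\T(\g_1,J_\theta)\rtimes_\alpha\R\cong I_+(\g_1,\omega_\theta)$, where $I_+$ is the restriction-to-$\R_+$ quotient of $C^*(\Heis(\g_1,\omega_\theta))$ introduced just above the theorem. (One should double-check that the two half-line extensions match: the crossed product's ideal is $\CCC_0(\g_2^*\setminus0)\otimes\mathcal K$ on the positive ray, which is exactly the fiber structure of $I_+$.)

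Next I would promote this to a morphism of continuous fields over $\mathbb{S}^*\g_2$. The point is that both sides are built from locally trivial bundles whose structure group has been reduced to $U(n)$ by the choice of $J$, and the isomorphism of \cite{CrenToeplitz} is natural with respect to unitary symplectomorphisms — this is the same equivariance used to glue the Toeplitz bundle in the first place. Concretely, on generators: a section $S_X$, $X\in\g_1$, of $\T(\g,J)$ maps under the fiberwise isomorphism to the section $\theta\mapsto$ (the class in $I_+(\g_1,\omega_\theta)$ of the function $\delta_1^X\in C^*(\Heis(\g_1,\omega_\theta))$ given by the Kirillov correspondence), and one checks this is a continuous section of $(I_+(\g_1,\omega_\theta))_\theta$, i.e.\ an element of $\widetilde{C}^*(G)$; similarly the fiberwise compact operators go to the ideal $\CCC_0(\g_2^*\setminus0)\otimes\mathcal K$. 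Because the full crossed-product algebra $\T(\g,J)\rtimes_\alpha\R$ is generated (as a $C^*$-algebra) by the $S_X$, the ideal of fiberwise compacts, and the copy of $C^*(\R)$, and because its extension by $\CCC_0(\g_2^*\setminus0)\otimes\mathcal K$ has quotient $\CCC_0((\g_1^*\setminus0)\times\mathbb{S}^*\g_2)$ — matching exactly the extension defining $\widetilde{C}^*_0(G)$ — a five-lemma / naturality argument upgrades the fiberwise isomorphisms to a global isomorphism $\T(\g,J)\rtimes_\alpha\R\isomto\widetilde{C}^*_0(G)$, where the target is $\widetilde{C}^*_0(G)$ rather than $\widetilde{C}^*(G)$ precisely because passing to the crossed product by $\alpha$ (trivial on the quotient) replaces the $t=0$ fiber $\CCC_0(\g_1^*)$ by $\CCC_0(\g_1^*\setminus0)\cong\CCC(\mathbb{S}^*\g_1)\otimes\CCC_0(\R^*_+)$, killing the trivial character.

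For the second isomorphism, I would simply restrict the global map to $\T_0(\g,J)\rtimes_\alpha\R$. By definition $\T_0(\g,J)$ consists of the sections whose image in $\CCC(\mathbb{S}^*\g_2\times\mathbb{S}^*\g_1)$ is constant in the $\mathbb{S}^*\g_2$-direction; since $\alpha$ is trivial on the quotient, this constancy is preserved in the crossed product, and under the identification above it corresponds exactly to the condition "the commutative quotients $F(\theta)(0)\in\CCC_0(\g_1^*)$ do not depend on $\theta$" that cuts out the image of $C^*(G)$ inside $\widetilde{C}^*(G)$ in the previous theorem — intersected with $\widetilde{C}^*_0(G)$. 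Hence the restriction lands isomorphically onto the image of $C^*_0(G)$, i.e.\ onto $C^*_0(G)$ itself by the previous theorem. Finally, $\R^*_+$-equivariance: the dilations $\delta_\lambda$ act on $\widehat G$ by $\pi_\theta\mapsto\pi_{\lambda^2\theta}$, i.e.\ by scaling the base $\g_2^*$, while on the Toeplitz side $\R^*_+$ acts through the dual of $\alpha$, which is the natural scaling of the crossed-product "frequency" variable; one checks these match on the generators $S_X$ (which are $\delta$-homogeneous of the appropriate weight under the Darboux rescaling $X\mapsto\lambda X$ recorded in the remark on polar coordinates) and on $C^*(\R)$, and equivariance follows by density.

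The main obstacle I anticipate is \emph{continuity of the gluing}: verifying that the fiberwise isomorphisms of \cite{CrenToeplitz} assemble into a morphism of continuous fields rather than merely a fiberwise bijection. This amounts to showing that the isomorphism $\T(\g_1,J_\theta)\rtimes_\alpha\R\cong I_+(\g_1,\omega_\theta)$ depends continuously on the symplectic data $(\g_1,\omega_\theta,J_\theta)$ — equivalently, that it is implemented by a formula (e.g.\ via Weyl quantization / the explicit creation–annihilation operators $A_j,A_j^*$) that varies continuously as the Darboux frame varies. This should follow from the naturality of Kirillov theory and the explicit model $\pi_\theta(W_j)=\ii A_j^*$ given in Section~5, together with the continuity promised at the end of Corollary~\ref{Corollary: CStar Group is continuous field}, but it is the step requiring genuine care rather than formal manipulation.
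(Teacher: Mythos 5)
Your proposal follows essentially the same route as the paper: reduce to the single symplectic vector space (Heisenberg) case of \cite{CrenToeplitz}, glue fiberwise over \(\mathbb{S}^*\g_2\) using the \(U(n)\)-naturality afforded by the fixed compatible complex structure \(J\), and then obtain the second isomorphism by restricting to elements with constant commutative quotient, exactly as in the paper's proof. The only slip is that the fiberwise target should be \(I_{+,0}(\g_1,\omega_{\theta})\) (the kernel of evaluation at the trivial character) rather than \(I_+(\g_1,\omega_{\theta})\), a point you effectively correct later when you observe that the crossed product by \(\alpha\) kills the trivial character.
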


\begin{proof}
    We will use \cite[Theorem 5.9]{CrenToeplitz}, which says that if \((V,\omega)\) is a symplectic vector space, then \(\T(V^{1,0})\rtimes \R\) is isomorphic to \(I_{+,0}(V,\omega)\), where \(I_{+,0}(V,\omega)\) is the kernel of the map \(f \mapsto f(0_{\R})(0_{V^*})\). Moreover this isomorphism is \(\R^*_+\)-equivariant and also \(U(n)\)-invariant. Therefore it generalizes to symplectic vector bundles (with fixed compatible complex structure).

    We now apply this result to \(\widetilde{C}^*_0(G)\) which is the subalgebra of sections of the bundle \((I_{+,0}(\g_1,\omega_{\theta}))_{\theta\in \g_2^*\setminus 0}\). This gives the first part of the theorem. This isomorphism induces the following diagram:
    \[\xymatrix{\T(\g,J)\rtimes_{\alpha} \R \ar[r] \ar[d] & \CCC_0(\mathbb{S}^*\g_2\times(\g_1^*\setminus \{0\})) \ar@{=}[d]\\
    \widetilde{C}^*_0(G) \ar[r] & \CCC_0(\mathbb{S}^*\g_2\times(\g_1^*\setminus \{0\}))}\]
    If we take on both sides the pre-image of \(\CCC_0(\g_1^* \setminus 0)\) (i.e. the sections that have constant commutative quotient) then we get the second part of the theorem.
\end{proof}


\begin{rem}[Another possible proof]
    Here we have reduced the proof to the case of the Heisenberg group. Another possibility would have been to adapt the proof for the Heisenberg group to the case of an arbitrary 2-step regular nilpotent group. This would have required to introduce principal symbols in the filtered calculus to understand the continuity at \(0 \in \g_2^*\). Principal symbols of order \(k \in \CC\) in the filtered calculus for the group \(G\) are smooth functions on \(\g^*\setminus 0\), homogeneous of degree \(k\). These functions multiply in a way that takes the group structure into account. By homogeneity, such a function \(\sigma\) is characterized by:
    \begin{itemize}
        \item a function on \(\g_1^*\setminus 0 \), homogeneous of degree \(k\), hence a function \(\sigma_0\) on the sphere \(\mathbb{S}^*\g_1\).
        \item a smooth family of functions \(\sigma_{\theta}\) on \(\g_1^*\), parameterized by \(\theta \in \mathbb{S}^*\g_2\).
    \end{itemize}
    If we get two symbols \(a \in \Sigma^k(G), b \in \Sigma^{\ell}(G)\), they multiply the following way: \((ab)_0 = a_0b_0\) is the usual (commutative) product of functions, and \((ab)_{\theta} = a_{\theta}\#_{\theta}b_{\theta}\). The product \(\#_{\theta}\) is given by the symplectic form. It is non-commutative, and if \(L_{\theta}\subset \g_1^*\) is a lagrangian subspace for \(\omega_{\theta}^*\) then we can quantize the symbol \(a_{\theta}\) into a Weyl operator \(\op^W(a_{\theta}) \colon \mathscr{S}(L_{\theta}) \to \mathscr{S}(L_{\theta})\), such that:
    \[\op^W(a_{\theta})\circ \op^W(b_{\theta})=\op^W(a_{\theta}\#_{\theta}b_{\theta}).\]
    Moreover this algebra of operators does not depend on the choice of \(L_{\theta}\). Finally, the principal part of \(a_{\theta}\) as a Weyl symbol is given by \(a_0\), hence does not depend on \(\theta\).

    We can thus see principal symbols \(a \in \Sigma^k(G)\) as smooth sections of the field of algebras of Weyl operators \((\mathcal{W}^k(\g_1^*,\omega_{\theta}^*))_{\theta \in \mathbb{S}^*\g_2}\), such that the principal symbol is constant. Of course not all such fields arise from elements in \(\Sigma^k(G)\) because of the gluing condition on these operators on \(\mathbb{S}^*\g_1\) only gives the continuity and not smoothness (see \cite{vanErpGorokhovsky} for such a discussion in the case of the Heisenberg group). At the level of \(C^*\)-completion for operators of order \(0\) it is however enough.

    On the other hand, we can identify \(\T(V,\omega)\) with the \(C^*\)-algebraic completion of \(\mathcal{W}^0(V^*,\omega^*)\). This shows that our algebra \(\T_0(\g,J)\) is isomorphic to \(\Sigma(G) = \overline{\Sigma^0(G)}\). We can then use the general fact that elements of \(\Sigma(G)\) are multipliers of \(C^*_0(G)\). 
    \end{rem}

\begin{thm}\label{Theorem: Symbols group}
    If \(G\) is a 2-step regular nilpotent group, \(\Sigma(G)\) the \(C^*\)-algebra of principal symbols of order \(0\) in the filtered calculus on \(G\), then:
    \[\Sigma(G) = \T_0(\g,J).\]
\end{thm}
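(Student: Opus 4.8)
The plan is to give $\Sigma^0(G)$ a concrete fibered description mirroring the one obtained for $\T_0(\g,J)$, and then to identify the fibers. First I would recall the structure of order-$0$ principal symbols on $G$. A symbol $\sigma\in\Sigma^0(G)$ is a smooth function on $\g^*\setminus 0$, homogeneous of degree $0$ for the dual dilations; by the description of the coadjoint orbits it is determined by a smooth function $\sigma_0$ on $\mathbb{S}^*\g_1$ together with a smooth family $(\sigma_\theta)_{\theta\in\mathbb{S}^*\g_2}$ of functions $\sigma_\theta$ on the affine fibers $\g_1^*$, subject to the compatibility that along each ray approaching the stratum $\{\theta=0\}$ the $\sigma_\theta$ degenerate to $\sigma_0$. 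The product on $\Sigma^0(G)$ restricts on $\{\theta=0\}$ to the pointwise product of functions, and on the fiber over $\theta$ to the Moyal product $\#_\theta$ attached to $\omega_\theta^*$ on $\g_1^*$ — the standard fact that the convolution algebra of the osculating group, after Fourier transform in the central variable, becomes a field of Moyal algebras. Quantizing $\sigma_\theta$ as a Weyl operator $\op^W(\sigma_\theta)$ on $\mathscr{S}(L_\theta)$ for a Lagrangian $L_\theta\subset\g_1^*$ realizes the fiber as $\mathcal{W}^0(\g_1^*,\omega_\theta^*)$, with leading symbol $\sigma_0$ independent of $\theta$ and, as recalled, independent of the choice of $L_\theta$. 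Hence $\Sigma(G)=\overline{\Sigma^0(G)}$ is the $C^*$-algebra of continuous sections of the bundle $(\overline{\mathcal{W}^0(\g_1^*,\omega_\theta^*)})_{\theta\in\mathbb{S}^*\g_2}$ whose image in the quotient $\CCC(\mathbb{S}^*\g_1)$ is constant in $\theta$.

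Next I would identify each fiber with a Toeplitz algebra. For a symplectic vector space $(V,\omega)$ with compatible complex structure $J$, the Bargmann--Segal transform intertwines Weyl quantization on a real Lagrangian in $V^*$ with Toeplitz quantization on the Fock space $\F^+(V^{1,0})$, modulo smoothing operators; passing to $C^*$-closures this gives an isomorphism $\overline{\mathcal{W}^0(V^*,\omega^*)}\cong\T(V^{1,0})$ carrying the Weyl exact sequence to the Toeplitz exact sequence \ref{Equation: ToeplitzSympl} (same ideal $\mathcal{K}$, same quotient $\CCC(\mathbb{S}^*V)$, both quotient maps being the principal-symbol map). This is the classical comparison of real and complex polarizations, and it underlies \cite{CrenToeplitz}, where one level up $\T(V^{1,0})\rtimes\R\cong I_{+,0}(V,\omega)$ is established. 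Crucially the isomorphism is natural with respect to unitary symplectomorphisms — on the Toeplitz side by the $\U(n)$-equivariance recorded in Section 2, on the Weyl side because $\op^W$ intertwines the metaplectic representation — hence canonical on isomorphism classes, and therefore it assembles fiberwise into an isomorphism of the bundles $(\overline{\mathcal{W}^0(\g_1^*,\omega_\theta^*)})_{\theta}$ and $(\T(\g_1,J_\theta))_{\theta}$ over $\mathbb{S}^*\g_2$, intertwining the two quotient maps onto $\CCC(\mathbb{S}^*\g_2\times\mathbb{S}^*\g_1)$. Restricting on both sides to the sub-$C^*$-algebras of sections whose quotient image is constant in $\theta$ gives $\Sigma(G)\cong\T_0(\g,J)$; tracking generators, $S_X$ corresponds to $\op^W(X_c)$, so under the standard identifications this is the asserted equality.

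Finally I would address density, namely that $\overline{\Sigma^0(G)}$ is all of this section algebra rather than a proper subalgebra — the only genuine subtlety, since the gluing condition defining a smooth symbol on $\mathbb{S}^*\g_1$ is stronger than mere continuity. This is harmless at the $C^*$-level: the image of $\Sigma^0(G)$ in $\CCC(\mathbb{S}^*\g_2\times\mathbb{S}^*\g_1)$ contains smooth functions, which are dense, and the intersection of $\Sigma^0(G)$ with the ideal $\CCC(\mathbb{S}^*\g_2)\otimes\mathcal{K}$ contains the subalgebra generated by the $S_X$ together with fiberwise finite-rank operators, which is dense there; so $\overline{\Sigma^0(G)}$ has the same ideal and quotient as $\T_0(\g,J)$, and the isometric map above, being surjective onto the quotient with dense range in the ideal, is onto.

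The main obstacle is precisely the fiberwise identification $\overline{\mathcal{W}^0(V^*,\omega^*)}\cong\T(V^{1,0})$ together with its naturality: one must check that the Bargmann--Segal correspondence respects the order-$0$ homogeneous symbol classes and is sufficiently $\U(n)$-equivariant to globalize over $\mathbb{S}^*\g_2$. Once this is extracted from \cite{CrenToeplitz}, the remainder is bookkeeping with the two exact sequences.
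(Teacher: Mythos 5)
Your proposal follows essentially the same route as the paper: describe order-$0$ symbols fiberwise as Weyl operators over $\mathbb{S}^*\g_2$ with constant principal part, identify each fiber $\overline{\mathcal{W}^0(\g_1^*,\omega_\theta^*)}$ with the Toeplitz algebra (the paper cites \cite{CrenToeplitz} for this, where you invoke Bargmann--Segal and $\U(n)$-equivariance), and then match ideals and quotients in the two exact sequences to conclude that the image is exactly $\T_0(\g,J)$. Your extra paragraph on density at the $C^*$-level corresponds to the paper's remark that smoothness versus continuity of the gluing is irrelevant after completion, so the argument is correct and essentially identical in substance.
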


\begin{proof}
    We fix a compatible complex structure on the symplectic bundle \(\g_1\times \mathbb{S}^*\g_2 \to \mathbb{S}^*\g_2\). As explained in the previous remark (compare with \cite[Proposition 6.6]{CrenToeplitz}), an element of \(\Sigma(G)\) defines a section in \(\T(\g,J)\). The image in the subquotients coincides with the equatorial symbol, i.e. we have the commutative diagram:

    \[\xymatrix{0 \ar[r] & \CCC(\mathbb{S}^*\g_2)\otimes\mathcal{K} \ar@{=}[d] \ar[r] & \Sigma(G) \ar[d] \ar[r] & \CCC(\mathbb{S}^*\g_1) \ar[d] \ar[r] & 0 \\
    0 \ar[r] & \CCC(\mathbb{S}^*\g_2)\otimes\mathcal{K} \ar[r] & \T(\g,J) \ar[r] & \CCC(\mathbb{S}^*\g_2\times \mathbb{S}^*\g_1) \ar[r] & 0
    }\]

    where the vertical arrow on the quotient is the inclusion of constant functions. Therefore the image of the middle arrow is exactly \(\T_0(\g,J)\).
\end{proof}

\begin{rem}\label{Remark: Explicit Isom Group}
We can make the isomorphism of Theorems \ref{Theorem: Toeplitz to Group} and \ref{Theorem: Symbols group} explicit. Fix \(\Delta \in \Sigma^2(G)\) such that if \(\theta\in \mathbb{S}^*\g_2\), \(\pi_{\theta}(\Delta) = N+1\) where \(N\) is the number operator (on the bundle of symmetric Fock spaces). For \(W \in \Gamma((\g_1,J)^{1,0})\), we send the operator \(S_W\) to \(W(1+\Delta)^{-1/2} \in \Sigma^0(G)\). This realizes the isomorphism of Theorem \ref{Theorem: Symbols group}. Using the fact that \(\Sigma(G)\) embeds as multipliers of \(C^*_0(G)\), and realizing the action of \(\R\) by the operators \(\Delta^{\ii t/2} \in \Sigma^{\ii t}(G)\) which are also multipliers of \(C^*_0(G)\), we get the isomorphism of Theorem \ref{Theorem: Symbols group}.
\end{rem}

\section{H-type groups}


In this section we investigate the special features of our construction for H-type groups. This is a special class of regular step-2 groups introduced by Kaplan \cite{Kaplan}. The motivation was that on these groups there is an explicit fundamental solution for the sublaplacian \(\sum_{i = 1}^{2N} X_i^2\) where \(X_1,\cdots,X_{2N}\) is a basis of \(\g_1\). These groups are endowed with a metric that is compatible with the symplectic structure in a way we will make explicit below. As such, they will be related to representations of Clifford algebras and thus become rather rigid. On the one hand, if we start with a H-type Lie algebras and consider a deformation within the class of H-type Lie algebras, the algebras stay in the same isomorphism class along the deformation, see Lemma \ref{Lemma: H-type rigidity} below. On the other hand, if we  allow the deformation to take place within the class of regular step-2 Lie algebras, it is very easy to get a non-H-type Lie algebra, see \cite{LevsteinTiraboschi}.

We approach these groups with the following problem. Our construction of Toeplitz algebra for a regular step-2 group \(G\) relied on the symplectic vector bundle \(\mathbb{S}^*\g_2\times \g_1\to \mathbb{S}^*\g_2\) (we have fixed a metric on \(\g_2\) and then see \(\mathbb{S}^*\g_2 \hookrightarrow \g_2^*\) as the unit sphere). Despite the bundle \(\mathbb{S}^*\g_2\times \g_1\) being trivial as a real vector bundle, it is not necessarily trivial as a symplectic vector bundle. The question is then, for which regular step-2 group can the symplectic vector bundle \(\mathbb{S}^*\g_2\times \g_1\) be trivialized ?

\begin{rem}
    All the reasonings below will implicitly use connectedness of the sphere \(\mathbb{S}^*\g_2\). For that we need to assume that \(\dim(\g_2)\geq 2\). The case where \(\dim(\g_2)=1\) corresponds to the Heisenberg groups. Indeed if \(\g\) is a regular step-2 Lie algebra with \(\g_2\) one dimensional, take any non-zero element \(\theta \in \g_2^*\). Then \(G\cong \Heis(\g_1,\omega_{\theta})\).
\end{rem}

Let \(N = \dim(\g_2)-1\geq 1\) be the dimension of the sphere, \(2n = \dim(\g_1)\). Real (oriented) vector bundles of rank \(2n\) over \(\mathbb{S}^*\g_2\) are classified by \(\pi_{N-1}(\mathrm{SO}(2n))\). If we choose a compatible complex structure on the symplectic bundle, the symplectic triviality is equivalent to complex triviality (which does not depend on the choice of compatible complex structure because all the resulting complex vector bundles are isomorphic). Complex vector bundles over \(\mathbb{S}^*\g_2\) are classified by \(\pi_{N-1}(\mathrm{U}(n))\). The exact sequence of homotopy groups of the fibration \( \mathrm{U}(n)\to \mathrm{SO}(2n) \to \faktor{\mathrm{SO}(2n)}{\mathrm{U}(n)}\) gives:
\[\xymatrix{\pi_N(\mathrm{SO}(2n)) \ar[r] & \pi_N\left(\faktor{\mathrm{SO}(2n)}{\mathrm{U}(n)}\right) \ar[dl]^{\delta} \\
 \pi_{N-1}(\mathrm{U}(n)) \ar[r] & \pi_{N-1}(\mathrm{SO}(2n)).}\] 
We have chosen a lift \(\xi \in \pi_{N-1}(\mathrm{U}(n))\) of our real vector bundle. By exactness we can write \(\xi = \delta(x)\) for some \(x \in \pi_N\left(\faktor{\mathrm{SO}(2n)}{\mathrm{U(n)}}\right)\). We have an explicit representative for this homotopy class. Indeed, remember that the set of symplectic forms on \(\g_1\) is homeomorphic to \(\faktor{\mathrm{SO}(2n)}{\mathrm{U(n)}}\). Then the map \(\omega \colon \theta \mapsto \omega_{\theta}\) defines a homotopy class \([\omega] \in \pi_N\left(\faktor{\mathrm{SO}(2n)}{\mathrm{U(n)}}\right)\) with \(\delta([\omega]) = \xi\) (this follows from the compatibility between the complex structure and the symplectic structure). Finally we want \(\xi  = 1\). By exactness this means that \([\omega]\) is the image of an element in \(\pi_N(\mathrm{SO}(2n))\). Consider the following diagram:

\[\xymatrix{\mathbb{S}^*\g_2 \ar[r]^(.35){\omega} \ar@{.>}[dr]_{\exists ? \Omega} & \faktor{\mathrm{SO}(2n)}{\mathrm{U}(n)} &  \\
 & \mathrm{O}(2n) \ar[u] & \mathrm{U}(n) \ar[l].}\]

The right hand side of the diagram is a fibration. We want a lift of \(\omega\) to \(\mathrm{SO}(2n)\). The sphere \(\mathbb{S}^*\g_2\) only has a cell in dimension \(0\) and \(N\). The obstruction to this lift lives in
\[H^N(\mathbb{S}^*\g_2,\pi_{N-1}(\mathrm{U}(n))) \cong \pi_{N-1}(\mathrm{U}(n)).\]
Indeed since \(\faktor{\mathrm{SO}(2n)}{\mathrm{U}(n)}\) is simply connected, the bundle of local coefficients is trivial, therefore the cohomology group is a geniune (singular) cohomology group. The identification then comes from the universal coefficients theorem. The problem is now that this class is precisely the class \(\xi\) above. Therefore we find that \(\xi\) is trivial if and only if it is trivial... We can however give a geometric meaning to the lift \(\Omega\).

The map \(\Omega\) takes values in \(\mathrm{SO}(2n)\) so we may fix a metric on \(\g_1\) so that all the occurances of \(\mathrm{SO}(2n)\) above correspond to \(\mathrm{SO}(\g_1,\lag,\rag)\). For every \(\theta\) we have an orthogonal map \(\Omega_{\theta} \in \mathrm{SO}(\g_1,\lag,\rag)\). It defines a lift of \(\omega\) if and only if for every \(X,Y \in \g_1\), the map:
\[\theta \mapsto \omega_{\theta}(\Omega_{\theta}X,\Omega_{\theta}Y)\]
is constant. Let \(J_{\theta} \in \End(\g_1)\) be the complex structure compatible with \(\omega_{\theta}\) such that the corresponding metric is \(\lag,\rag\). Then the endomorphism \(\Omega_{\theta}^{-1}J_{\theta}\Omega_{\theta}\) does not depend on \(\theta \in \mathbb{S}^*\g_2\). Moreover this complex structure, and hence all the \(J_{\theta}\), is orthogonal for the inner product on \(\g_1\). This is precisely the definition of an H-type group.

\begin{mydef}[Kaplan] A step-2 Lie algebra is of H-type if it can be endowed with a metric such that \(\g_1\) and \(\g_2\) are orthogonal and such that the map \(J_z \in \End(\g_1), z \in \g_2^*\) defined by:
\[\forall X,Y \in \g_1,  \lag z , [X,Y]\rag = \lag J_z X, Y\rag,\]
is orthogonal whenever \(z\) has norm 1.
\end{mydef}

In this definition, we don't consider the metric to be part of the data. Indeed, it is proved in \cite{KaplanSubils} that two H-type groups with a compatible metric, that are isomorphic as groups, are also isomorphic in an isometric way.

\begin{thm}A regular step-2 Lie algebra is of H-type if and only if the symplectic bundle \(\mathbb{S}^*\g_2 \times \g_1 \to \mathbb{S}^*\g_2\) is trivial.\end{thm}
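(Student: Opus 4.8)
The statement packages the obstruction-theoretic discussion preceding it into a single equivalence, and I would prove it as two implications. First fix, once and for all, a metric on $\g_2$ (so that $\mathbb{S}^*\g_2$ is a round sphere $S^N$ with $N=\dim\g_2-1$), a metric $\langle\,,\rangle$ on $\g_1$ (so that $\mathrm{SO}(2n)=\mathrm{SO}(\g_1,\langle\,,\rangle)$ with $2n=\dim\g_1$), and a compatible complex structure on the symplectic bundle $\mathbb{S}^*\g_2\times\g_1$. I will use freely the two facts already recorded in the discussion: (i) the symplectic bundle is trivial if and only if the associated complex bundle $E$ (structure group $\mathrm{U}(n)$) is trivial, and this is insensitive to the choice of compatible complex structure; and (ii) $E$ is trivial if and only if the classifying map $\omega\colon S^N\to\mathrm{SO}(2n)/\mathrm{U}(n)$, $\theta\mapsto\omega_\theta$ (through the standard identification of the space of symplectic forms with a component of $\mathrm{SO}(2n)/\mathrm{U}(n)$), lifts to a map $\Omega\colon S^N\to\mathrm{SO}(\g_1,\langle\,,\rangle)$ through the fibration $\mathrm{U}(n)\hookrightarrow\mathrm{SO}(2n)\twoheadrightarrow\mathrm{SO}(2n)/\mathrm{U}(n)$ (a homotopy-lift can be rectified to an honest one since $S^N$ has cells only in dimensions $0$ and $N$). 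So the whole point is to match the existence of such an $\Omega$ with Kaplan's condition.

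\textbf{Triviality $\Rightarrow$ H-type.} Suppose the symplectic bundle is trivial; by (i)--(ii) a lift $\Omega$ exists. Unwinding what ``$\Omega$ lifts $\omega$'' means, through the identification of $\mathrm{SO}(2n)/\mathrm{U}(n)$ with the space of compatible complex structures: the map $\theta\mapsto\omega_\theta(\Omega_\theta X,\Omega_\theta Y)$ is constant in $\theta$, equivalently the endomorphisms $J_\theta:=\Omega_\theta J_0\Omega_\theta^{-1}$ (for a fixed reference $J_0$) are all orthogonal and satisfy $\omega_\theta(X,Y)=\langle J_\theta X,Y\rangle$. Transporting the metric to $\g_2^*$ and writing $z$ for a unit covector, this reads $\langle z,[X,Y]\rangle=\omega_z(X,Y)=\langle J_zX,Y\rangle$ with $J_z$ orthogonal, which is exactly the definition of an H-type Lie algebra.

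\textbf{H-type $\Rightarrow$ triviality.} Now suppose $\g$ is of H-type with metric $\langle\,,\rangle$. By definition $(J_\theta)_{\theta\in\mathbb{S}^*\g_2}$ is a compatible complex structure on the symplectic bundle which is orthogonal for the fixed metric, so $\omega_\theta(X,J_\theta Y)=\langle X,Y\rangle$ for all $\theta$ and the associated Hermitian metrics share a common real part. I would then build the lift $\Omega$ (equivalently, trivialize $E$) directly from this family. The key structural input is that orthogonality of the $J_z$ forces, by polarization, the Clifford relations $J_zJ_{z'}+J_{z'}J_z=-2\langle z,z'\rangle$, so that $\g_1$ becomes a module over the Clifford algebra of $\g_2$; from this module structure one reads off explicit complex-linear intertwiners (for instance $\theta\mapsto J_{v_\theta}J_{\theta_0}$ with $v_\theta$ the unit bisector of a fixed $\theta_0$ and $\theta$) trivializing $E$ over $\mathbb{S}^*\g_2\setminus\{-\theta_0\}$, and then one checks that the resulting clutching function over the equatorial sphere $\mathbb{S}^{N-1}$ is null-homotopic, so that $E$ is globally trivial. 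Triviality of the symplectic bundle follows from (i).

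The step I expect to be the main obstacle is this last one, the implication H-type $\Rightarrow$ triviality: the other direction is essentially bookkeeping about what ``lift'' means, whereas here orthogonality of each individual $J_\theta$ does not by itself kill the obstruction class $\xi\in\pi_{N-1}(\mathrm{U}(n))$ — one genuinely has to exploit the way the whole family $(J_\theta)_\theta$ interlocks (the Clifford-module/representation-theoretic structure on $\g_1$, together with the rigidity of H-type algebras reflected in the discreteness of their moduli) to produce a coherent global trivialization and to control the gluing over $\mathbb{S}^{N-1}$.
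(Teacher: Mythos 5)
Your first implication (triviality \(\Rightarrow\) H-type) is essentially the paper's own argument: unwind what a lift \(\Omega\) of \(\omega\) through \(\mathrm{U}(n)\to\mathrm{SO}(2n)\to\faktor{\mathrm{SO}(2n)}{\mathrm{U}(n)}\) means and read off Kaplan's condition; no issue there beyond the same implicit normalization (replacing \(\omega_\theta\) by a calibrated representative) that the paper also glosses over. The problem is the second implication, and it sits exactly where you flagged it: the assertion that ``the resulting clutching function over \(\mathbb{S}^{N-1}\) is null-homotopic'' is not proved, and it is in fact false in general, so this direction cannot be completed by your route. Concretely, take the quaternionic Heisenberg algebra: \(\g_1=\mathbb{H}^n\), \(\g_2=\Im\mathbb{H}\), \(J_z=\) left multiplication by \(z\), which is H-type. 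The map \(Sp(1)\to \mathbb{S}^*\g_2\cong S^2\), \(q\mapsto qiq^{-1}\), is the Hopf fibration with stabilizer \(\mathrm{U}(1)=\{e^{i\alpha}\}\), and \(X\mapsto qX\) is a \(\CC\)-linear isomorphism \((\g_1,J_i)\to(\g_1,J_{qiq^{-1}})\). Hence the complex bundle \((\g_1,J_\theta)_{\theta\in S^2}\) is the associated bundle \(Sp(1)\times_{\mathrm{U}(1)}(\g_1,J_i)\), where \(e^{i\alpha}\) acts as the complex scalar \(e^{i\alpha}\) on \((\g_1,J_i)\cong\CC^{2n}\); this is \(\ell\otimes_{\CC}\CC^{2n}\) with \(\ell\) the Hopf line bundle, so its first Chern class is \(\pm 2n\neq 0\). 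Since the \(J_\theta\) are compatible with the \(\omega_\theta\), this complex bundle computes the symplectic bundle, which is therefore nontrivial even though the algebra is H-type; equivalently, your clutching map over the equator has winding number \(\pm2n\) in \(\pi_1(\mathrm{U}(2n))\). Orthogonality of each \(J_\theta\) plus the Clifford relations give you the local trivializations \(\theta\mapsto J_{v_\theta}J_{\theta_0}\), but they do not kill the obstruction class \(\xi\in\pi_{N-1}(\mathrm{U}(n))\) -- on the contrary, this is precisely the Atiyah--Bott--Shapiro-type construction producing nontrivial bundles over spheres from Clifford modules.

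You should also be aware that you are not missing an argument available in the paper: the paper's proof of this direction is the single sentence that \(\Omega\colon\theta\mapsto J_\theta\) lifts \(\omega\), and this does not satisfy the paper's own lifting criterion, since \(\omega_\theta(J_\theta X,J_\theta Y)=\omega_\theta(X,Y)=\langle J_\theta X,Y\rangle\) depends on \(\theta\) (it changes sign under \(\theta\mapsto-\theta\)). So your diagnosis that ``H-type \(\Rightarrow\) triviality'' is the genuinely hard (indeed, problematic) step is correct; as stated, the equivalence fails for \(\dim\g_2\geq 3\) (e.g.\ the quaternionic case above), and at best survives in the form ``trivial \(\Rightarrow\) H-type'' or for \(\dim\g_2=2\), where bundles over \(S^1\) are automatically trivial.
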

\begin{proof}
Thus far we have proved that if the bundle was trivial then the Lie algebra had to be of H-type. Conversely if \(\g\) is of H-type then the map \(\Omega \colon \theta \mapsto J_{\theta}\) lifts the map \(\omega\). Consequently the symplectic bundle is trivial.
\end{proof}

\begin{rem}
    We can include the Heisenberg groups in the previous theorem by saying that the bundle is trivial on each component of 
\end{rem}

\begin{cor}If \(\g\) is an H-type Lie algebra, then trivializing the symplectic bundle gives a complex structure \(J_{cst}\in \End(\g_1)\). Then we have:
\[\T(\g, J_{cst}) \cong \CCC(\mathbb{S}^*\g_2) \otimes \T(\g_1^{1,0}).\]
Moreover \(\T_0(\g, J_{cst})\) is then the algebra of \(\T(\g_1^{1,0})\)-valued functions on \(\mathbb{S}^*\g_2\) such that the image in the quotient \(\CCC(\mathbb{S}^*\g_1)\) is constant.
\end{cor}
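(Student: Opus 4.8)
The plan is to combine the symplectic trivialization furnished by the preceding theorem with the unitary functoriality of the symmetric Fock space construction. Since $\g$ is of H-type, fix the H-type metric $\lag,\rag$ on $\g_1$ and, for a compatible complex structure $(J_\theta)_{\theta\in\mathbb{S}^*\g_2}$ realizing $\T(\g,J)$, recall from the discussion preceding the theorem the family $\Omega_\theta\in\SO(\g_1,\lag,\rag)$ with $J_{cst}:=\Omega_\theta^{-1}J_\theta\Omega_\theta$ independent of $\theta$. As $\Omega_\theta$ is orthogonal for $\lag,\rag$ and intertwines $J_{cst}$ with $J_\theta$, it is a unitary isomorphism of hermitian spaces $(\g_1,J_{cst})\isomto(\g_1,J_\theta)$ depending smoothly on $\theta$ (one checks equivalently that $\omega_\theta(\Omega_\theta\cdot,\Omega_\theta\cdot)$ is the $\theta$-independent symplectic form with which $J_{cst}$ is compatible). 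Here and below $\g_1^{1,0}$ denotes the $(1,0)$-part for $J_{cst}$.

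I would then push this through Section~2: the induced unitaries $\widetilde{\Omega_\theta}\colon\F^+(\g_1^{1,0})\to\F^+((\g_1,J_\theta)^{1,0})$ intertwine $\T(\g_1^{1,0})$ with $\T(\g_1,J_\theta)$ continuously in $\theta$, hence they trivialize the bundle of Toeplitz algebras $(\T(\g_1,J_\theta))_{\theta\in\mathbb{S}^*\g_2}$ whose section algebra is $\T(\g,J_{cst})$. This gives $\T(\g,J_{cst})\cong\CCC(\mathbb{S}^*\g_2,\T(\g_1^{1,0}))=\CCC(\mathbb{S}^*\g_2)\otimes\T(\g_1^{1,0})$, the first claim. (Note that the generic argument via separability of the fibers only trivializes the sub-bundle of compact operators; it is the H-type hypothesis that trivializes the full Toeplitz bundle.)

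For $\T_0(\g,J_{cst})$ I would transport the Toeplitz extension $0\to\CCC(\mathbb{S}^*\g_2)\otimes\mathcal{K}\to\T(\g,J_{cst})\to\CCC(\mathbb{S}^*\g_2\times\mathbb{S}^*\g_1)\to0$ through this trivialization. On each fiber the symbol map is $\T(\g_1,J_\theta)\surj\CCC(\mathbb{S}^*\g_1)$ and $\mathbb{S}^*\g_1$ depends only on the real structure, so after trivialization the quotient map becomes $\Id\otimes q\colon\CCC(\mathbb{S}^*\g_2)\otimes\T(\g_1^{1,0})\surj\CCC(\mathbb{S}^*\g_2)\otimes\CCC(\mathbb{S}^*\g_1)=\CCC(\mathbb{S}^*\g_2\times\mathbb{S}^*\g_1)$, with $q$ the fiber symbol map (using exactness of $\CCC(\mathbb{S}^*\g_2)\otimes(-)$). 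An element $f\in\CCC(\mathbb{S}^*\g_2,\T(\g_1^{1,0}))$ maps to $\theta\mapsto q(f(\theta))$, which is $\theta$-independent exactly when the image of $f$ in the quotient $\CCC(\mathbb{S}^*\g_1)$ is constant along $\mathbb{S}^*\g_2$; by definition of $\T_0$ this identifies $\T_0(\g,J_{cst})$ with the asserted algebra.

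I do not expect a real obstacle: the one point needing care is that the trivialization of the symplectic bundle can be chosen unitary for a hermitian structure, which is automatic because $J_{cst}$, the transported symplectic form and $\lag,\rag$ are all carried by the same orthogonal, complex-linear maps $\Omega_\theta$; the rest is functoriality of $\F^+$ and exactness of $\CCC(X)\otimes(-)$.
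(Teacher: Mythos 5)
Your first isomorphism is fine and is exactly the route the paper (which states this corollary without proof) intends: the trivialization $\Omega_\theta$ of the symplectic bundle intertwines $J_{cst}$ with $J_\theta$ and is isometric for the induced hermitian metrics, so by the unitary functoriality of $\F^+$ from Section 2 the bundle of Toeplitz algebras is trivial and its section algebra is $\CCC(\mathbb{S}^*\g_2)\otimes\T(\g_1^{1,0})$.

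The gap is in the second half, at the sentence ``after trivialization the quotient map becomes $\Id\otimes q$''. Conjugation by $\widetilde{\Omega_\theta}$ intertwines the fiberwise symbol maps only equivariantly, not trivially. Indeed, the identification of the co-sphere of $(\g_1,J)^{1,0}$ with $\mathbb{S}^*\g_1$ uses $J$, and the composite $\g_1\to(\g_1,J_{cst})^{1,0}\xrightarrow{\Omega_\theta}(\g_1,J_\theta)^{1,0}\to\g_1$ is exactly $\Omega_\theta$; concretely, $\Ad(\widetilde{\Omega_\theta})$ sends $S_{X-\ii J_{cst}X}$ to $S_{\Omega_\theta X-\ii J_\theta\Omega_\theta X}$, whose symbol in $\CCC(\mathbb{S}^*\g_1)$ is the function determined by $\Omega_\theta X$, not by $X$. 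Hence the transported quotient map is $\Id\otimes q$ composed with the $\theta$-dependent homeomorphism $R_\theta$ of $\mathbb{S}^*\g_1$ induced by $\Omega_\theta$, and $R_\theta$ cannot be constant up to positive scalars, since $\Omega_\theta$ pulls the varying forms $\omega_\theta$ back to a fixed one. Consequently, under your isomorphism $\T_0(\g,J_{cst})$ is carried onto the twisted algebra of those $f$ with $q(f(\theta))=g\circ R_\theta$ for a fixed $g$, not onto the algebra of $f$ with $q(f(\theta))$ constant; for instance a constant section $\theta\mapsto S_w$ lies in the latter but its preimage $\theta\mapsto S_{\Omega_\theta w}$ has nonconstant symbol, so it is not in $\T_0(\g,J_{cst})$. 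To get the statement as written you must either read ``the image in the quotient'' through the transported symbol map (which is presumably how the corollary is meant), or supply an additional argument producing a continuous family of automorphisms of $\T(\g_1^{1,0})$ lifting the rotations $R_\theta$ on the quotient; this cannot be done by Fock-space unitaries $\widetilde{U}$ with $U\in\U(\g_1,J_{cst})$, because such $U$ induce the same sphere map as $\Omega_\theta$ only if $\Omega_\theta$ is a positive multiple of a $J_{cst}$-linear map, which would force $J_\theta=J_{cst}$. So the exactness-of-$\CCC(\mathbb{S}^*\g_2)\otimes(-)$ bookkeeping is not the issue; the issue is that your trivialization does not respect the canonical symbol maps, and this is precisely where the constancy condition defining $\T_0$ lives.
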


\begin{rem}
    If we take the Heisenberg group \(\Heis(V,\omega)\), then there is no complex structure compatible with both \(\omega\) and \(-\omega\). The algebras \(\T(V^{1,0})\) and \(\T(V^{0,1})\) are still isomorphic (since they are both isomorphic to \(\T(\CC^n)\) where \(\dim(V) = 2n\)). Under this non-natural isomorphism, we can also see \(\T(\heis(V,\omega))\) in the form as in the last Corollary.
\end{rem}

\begin{ex}
    Consider \((V,\omega)\) a complex symplectic space and let \(\heis_{\CC}(V,\omega)\) be the complex Lie algebra constructed from the abelian \(V\) with the 2-cocycle \(\omega\). If we see \((V,\omega)\) as the complexification of a real symplectic space \((V_0,\omega_0)\), then \(\heis_{\CC}(V,\omega)\) is the complexification of \(\heis(V_0,\omega_0)\). In particular we have now a 2-dimensional center. Trivializing the symplectic bundle, we get:
    \[\T(\heis_{\CC}(V,\omega),J_{cst}) = \CCC(\mathbb{S}^1) \otimes \T(V_0^{1,0}\oplus V_0^{0,1}).\]
\end{ex}

The H-type groups correspond to Clifford modules. Indeed, consider for \(z\in \g_2 \cong \g_2^*\), the endomorphism \(J_z \in \End(\g_1)\) characterized by the identity: \(\lag x,y\rag = \omega_z(J_zx,y)\). Then for \(x \in \g_1\), \(J_z(x)\) is the unique other element of \(\g_1\) satisfying \([J_z(x),x] = |x|^2z\). Some quick computations yield:
\[\forall x,y\in \g_1, \forall z,w \in \g_2, \lag J_z x,J_wy\rag +\lag J_wx,J_zy\rag = 2\lag x,y\rag\lag z,w\rag .\]
Now using anti-self adjointness, we obtain the Clifford relations:
\[J_zJ_w + J_wJ_z = -2\lag z,w\rag \Id.\]
Consequently, the data of a (metric) H-group is the data of a (unitary) \(\Cl(\g_2)\)-module structure on \(\g_1\). There is then a one to one correspondence between isomorphisms of Clifford modules and isomorphisms of Lie algebras being the identity on the center \(\g_2\). In general an isomorphism of Lie algebras doesn't have to be the identity on the center. Because of this, non-isomorphic Clifford modules could yield isomorphic H-type Lie algebras. It is shown in \cite{KaplanRicci} that, if we fix the dimension of the center, there is only one irreducible H-type Lie algebra with a center of that dimension up to isomorphism. Now if \(\mathfrak{v}_1,\mathfrak{v}_2\) are two complementary Clifford submodules of \(\g_1\) then we have \([\mathfrak{v}_1,\mathfrak{v}_2] = 0\). Indeed, assume \([x,y] = z \neq 0\) with \(x \in \mathfrak{v}_1, y\in \mathfrak{v}_2\). Then \(J_z(y) = |y|^2x\), contradiction. Therefore an H-type Lie algebra is completely classified, up to isomorphism, by the pair \((\dim(\g_2),\dim(\g_1))\). Also, from the theory of Clifford modules, the dimension of \(\g_1\) has to be a non-zero multiple of the dimension of the irreducible Clifford modules of \(\Cl(\g_2)\) (which only depends on the dimension of \(\g_2\)).

\section{Polycontact manifolds}


\begin{mydef}[van Erp]
    A polycontact structure on a smooth manifold \(M\) is a smooth distribution \(H\subset TM\) such that, if \(\theta \in \Gamma_{loc}(H^\perp)\) then \(\diff\theta_{|H \times H}\) is non-degenerate. The pair \((M,H)\) is called a polycontact manifold.
\end{mydef}

Given a smooth manifold \(M\) and a subbundle \(H\subset TM\), we can create a bundle of nilpotent groups the following way. Consider the vector bundle \(H \oplus \faktor{TM}{H}\). The curvature form of \(H\) is the bundle map \(H \times H \to \faktor{TM}{H}\) induced by the Lie bracket of vector fields:
\[(X_m,Y_m) \mapsto [X,Y]_m \mod H_m, \ m\in M.\]
Such a map is indeed well defined and does not depend on the choice of vector fields representing the elements \(X_m,Y_m\in H_m\). This form endows \(H \oplus \faktor{TM}{H}\) with a Lie algebra structure on each fiber by taking the Lie bracket with any element of \(\faktor{TM}{H}\) to be zero. This Lie algebra structure depends smoothly on the base-point, but is not necessarily locally trivial.

The Lie algebras being nilpotent, we can integrate them all at once using the Baker-Campbell-Hausdorff formula. This gives a smooth family \(T_HM\) of nilpotent groups on \(M\) called the osculating groups.

\begin{prop}
    Let \(M\) be a manifold endowed with a smooth distribution \(H\subset TM\). The following are equivalent:
    \begin{enumerate}
        \item \((M,H)\) is a polycontact manifold.
        \item For every \(m\in M, \theta \in H_m^{\perp} \setminus 0\), the form \(\omega_{\theta} \in \Lambda^2H_m^*\) defined by composing the curvature 2-form with \(\theta\) is non-degenerate.
        \item All osculating groups are regular 2-step nilpotent groups. 
    \end{enumerate}
\end{prop}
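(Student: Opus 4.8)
The statement is a fibrewise, local reformulation of a single linear-algebraic fact, so the plan is to translate each condition into a statement about the osculating Lie algebra at an arbitrary point and then quote Proposition~\ref{Proposition:Regular 2-step Conditions}.

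I would first treat (1) $\Leftrightarrow$ (2). Identify $H^\perp$ with $(\faktor{TM}{H})^*$, so that a local section $\theta$ of $H^\perp$ is a local $1$-form annihilating $H$. For $X,Y\in\Gamma(H)$, Cartan's formula $\diff\theta(X,Y)=X\theta(Y)-Y\theta(X)-\theta([X,Y])$ collapses — because $\theta$ kills $H$ — to $\diff\theta(X,Y)=-\theta([X,Y]\bmod H)=-\omega_{\theta}(X,Y)$, where $\omega_{\theta}$ is the curvature $2$-form composed with $\theta$. The key observation is that the value of this expression at a point $m$ depends only on $\theta_m\in H_m^\perp$ and not on the chosen extension, while conversely every covector in $H_m^\perp$ is the value of some local section. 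Hence "$\diff\theta_{|H\times H}$ non-degenerate for every local section $\theta$ of $H^\perp$" is literally "$\omega_{\theta}$ non-degenerate for every $m\in M$ and every $\theta\in H_m^\perp\setminus 0$", which is the desired equivalence.

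For (2) $\Leftrightarrow$ (3), fix $m\in M$ and identify the osculating Lie algebra with the graded $2$-step nilpotent Lie algebra $\g_m=H_m\oplus(\faktor{TM}{H})_m$ whose degree-$1$ part is $H_m$, whose degree-$2$ part $(\faktor{TM}{H})_m$ is central, and whose bracket is the curvature form. The standing hypothesis $\Gamma(H)+[\Gamma(H),\Gamma(H)]=\Gamma(TM)$ gives fibrewise $[\g_1,\g_1]=\g_2$, so $\g_m$ is genuinely a $2$-step nilpotent Lie algebra in the sense used earlier. Under the canonical identification $\g_2^*=(\faktor{TM}{H})_m^*\cong H_m^\perp$, the form $\omega_{\theta}$ occurring in Proposition~\ref{Proposition:Regular 2-step Conditions} is exactly the one in condition (2); that proposition therefore says $\g_m$ is regular precisely when (2) holds at $m$. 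Since the osculating group is connected and simply connected, it is a regular $2$-step nilpotent group exactly when $\g_m$ is a regular $2$-step nilpotent Lie algebra, and quantifying over $m$ closes the argument.

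The proof has essentially no hard core: the only substantive step is the Cartan-formula computation together with the remark that the resulting pointwise value is independent of the extension of $\theta$, after which everything is packaging. The one place to be careful is not to conflate the two meanings of "$\theta$" (a local $1$-form versus a covector in a single fibre) and to keep the identifications $H^\perp\cong(\faktor{TM}{H})^*$ and the induced $\omega_{\theta}$ compatible with the conventions of Proposition~\ref{Proposition:Regular 2-step Conditions}. It is also worth noting in passing that condition (2) is automatically consistent with the grading requirement $[\g_1,\g_1]=\g_2$: a nonzero $\theta$ vanishing on the image of the curvature form would make $\omega_{\theta}=0$ and hence degenerate, so no separate verification of that point is needed.
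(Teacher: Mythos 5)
Your proof is correct and follows essentially the same route as the paper: the Cartan formula $\diff\theta(X,Y)=-\theta([X,Y])$ for $\theta$ annihilating $H$ gives (1) $\Leftrightarrow$ (2), and (2) $\Leftrightarrow$ (3) is exactly Proposition~\ref{Proposition:Regular 2-step Conditions} applied fibrewise to the osculating Lie algebras. Your extra remarks (pointwise dependence on $\theta_m$ only, and compatibility with $[\g_1,\g_1]=\g_2$) are sound but not a different method.
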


\begin{proof}
    Let \(X,Y\in \Gamma_{loc}(H)\) and \(\theta \in \Gamma_{loc}(H^\perp)\). We have:
    \[\diff\theta(X,Y) = X\cdot\theta(Y) - Y\cdot \theta(X) - \theta([X,Y]) = -\theta([X,Y]),\]
    thus the two first conditions are equivalent. The equivalence between the last two conditions is Proposition \ref{Proposition:Regular 2-step Conditions}.
\end{proof}

\begin{ex}
    Contact manifolds, quaternionic and octonionic contact manifolds \cite{BiquardQuaternionicContact} are all examples of polycontact manifold.
    
    A regular 2-step nilpotent group, with the left invariant distribution induced by the filtration of its Lie algebra is a polycontact manifold. If \(\Gamma \subset G\) is a discrete subgroup, the manifold \(\faktor{G}{\Gamma}\) is a polycontact manifold. If \(\Gamma\) is cocompact we then have a compact polycontact manifold.
\end{ex}

\begin{rem}
    In all the previous examples, the osculating groups are the same no matter the point on a manifold. This is because these examples are either constructed from the group, or are particular cases of H-type manifolds where this phenomenon is to be expected, see Section \ref{Section: H-type manifolds} below.

    It is however not the general case. It is shown in \cite{LevsteinTiraboschi} how a H-type group can be deformed into a family of non-isomorphic regular 2-step nilpotent groups. 
\end{rem}

\section{Fields of Toeplitz algebras}


Let \((M,H)\) be a polycontact manifold. From the osculating groupoid \(T_HM\) we obtain a \(C^*\)-algebra \(C^*(T_HM)\). This \(C^*\)-algebra is a continuous field of \(C^*\)-algebras over \(M\), with fiber at \(x\in M\) equal to the group \(C^*\)-algebra \(C^*(T_{H,x}M)\). The spectrum of \(C^* (T_HM)\) is homeomorphic to \(\faktor{\ttt_HM^*}{\Ad^*(T_HM)}\). We again have a dichotomy between the flat orbits and points. The flat orbits are parameterized by \(H^{\perp}\setminus M\) and correspond to infinite dimensional representations. They provide an ideal of \(C^*(T_HM)\) isomorphic to \(\CCC_0(H^{\perp}\setminus M, \mathcal{K})\). The quotient corresponds to the characters, i.e. fixed points for the coadjoint action, parameterized by \(H^*\).

\begin{prop}\label{Prop: Extension CStar Osculating}
    Let \((M,H)\) be a polycontact manifold, we have the exact sequence:
    \[\xymatrix{0 \ar[r] & \CCC_0(H^{\perp}\setminus M, \mathcal{K}) \ar[r] & C^*(T_HM) \ar[r] & \CCC_0(H^*) \ar[r] & 0.}\]
\end{prop}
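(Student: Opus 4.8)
The strategy is to deduce this extension from the corresponding statement for a single regular 2-step nilpotent group (Corollary \ref{Corollary: Extension CStar Group}), applied fiberwise, together with the continuous field structure of $C^*(T_HM)$ over $M$. First I would recall that $C^*(T_HM)$ is a continuous field over $M$ whose fiber at $x$ is $C^*(T_{H,x}M)$; by the polycontact hypothesis each osculating group is regular 2-step nilpotent, so each fiber carries the extension of Corollary \ref{Corollary: Extension CStar Group}, with ideal $\CCC_0(\g_{2,x}^*\setminus 0,\mathcal{K})$ and quotient $\CCC_0(\g_{1,x}^*)$. Here $\g_{1,x} = H_x$ and $\g_{2,x}^* = H_x^\perp$ under the identifications coming from the curvature form. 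The plan is to show that these fiberwise ideals assemble into the global ideal $\CCC_0(H^\perp\setminus M,\mathcal{K})$ and the fiberwise quotients into $\CCC_0(H^*)$.

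The key input is Kirillov/Lee theory as used in Corollary \ref{Corollary: CStar Group is continuous field}: the spectrum of $C^*(T_HM)$ is $\faktor{\ttt_HM^*}{\Ad^*(T_HM)}$, and by the fiberwise computation of coadjoint orbits this space decomposes as $(H^\perp\setminus M)\sqcup H^*$, with $H^\perp\setminus M$ open and $H^*$ closed, exactly as in the single-group case but now over $M$. The open subset $H^\perp\setminus M$ of the spectrum corresponds to an ideal $I\triangleleft C^*(T_HM)$ with $\widehat{I}=H^\perp\setminus M$ and $\widehat{C^*(T_HM)/I}=H^*$. It remains to identify $I\cong \CCC_0(H^\perp\setminus M,\mathcal{K})$ and $C^*(T_HM)/I\cong \CCC_0(H^*)$. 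For the quotient: the characters of each osculating group are exactly the points of $H_x^*$, and the quotient map $C^*(T_{H,x}M)\to \CCC_0(\g_{1,x}^*)$ depends continuously on $x$ (it is induced by restriction to the abelianization $H\oplus 0$, which is a subbundle), so the quotient field has commutative fibers $\CCC_0(H_x^*)$ glued over $M$, giving $\CCC_0(H^*)$ as the bundle $H^*\to M$ is an honest (locally trivial) vector bundle. For the ideal: over $H^\perp\setminus M$ the infinite-dimensional representations realize each fiber as $\mathcal{K}$ of a bundle of separable Hilbert spaces; since such a bundle over a (paracompact) space is trivializable, the continuous field of compacts is trivial, yielding $\CCC_0(H^\perp\setminus M,\mathcal{K})$.

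I expect the main obstacle to be the continuity/triviality of the field of compact operators forming the ideal — i.e. verifying that over $H^\perp\setminus M$ the representations $\pi_\theta$ vary continuously in $\theta$ (in the sense needed for the continuous-field structure) and that the resulting field of Hilbert spaces $\mathcal{F}^+_{\theta,J}$ is locally trivial, hence globally trivializable. This is precisely the point deferred in the proof of Corollary \ref{Corollary: CStar Group is continuous field} to the concrete Fock-space description of the representations: with the choice of a metric on $H^\perp$ and a compatible complex structure $J$ on $H$ (both exist, the latter because compatible complex structures form a contractible space), one gets an explicit bundle of symmetric Fock spaces and the map $\theta\mapsto \pi_\theta(f)$ is norm-continuous for $f$ in a dense subalgebra. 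Modulo this, the decomposition of the spectrum and the Lee-type continuity result give the extension directly; the identification of the quotient is routine since $H^*$ is a genuine vector bundle over $M$.
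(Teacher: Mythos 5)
Your proposal is correct and follows essentially the same route as the paper: the paper's proof simply observes that the ideal and quotient maps are globally well defined and that exactness holds fiberwise by Corollary \ref{Corollary: Extension CStar Group}, hence globally, within the continuous field structure of \(C^*(T_HM)\) over \(M\). Your additional detail (the spectrum decomposition \((H^\perp\setminus M)\sqcup H^*\) via Kirillov--Lee theory and the trivialization of the field of compacts through the Fock-space description) is exactly the content the paper relies on implicitly from the preceding discussion, so there is no discrepancy to flag.
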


\begin{proof}
    The maps are well defined globally. The sequence is then exact on each fiber by Corollary \ref{Corollary: Extension CStar Group}, so it is itself exact. Real codimension 1 submanifolds in hypercomplex spaces (with strong pseudoconvexity assumptions) are an example of polycontact manifold where the osculating groups are not necessarily all the same \cite{VanErpPolycontact}.
\end{proof}

Using this proposition, we define \(C^*_0(T_HM)\) to be the preimage by the quotient map of \(\CCC_0(H^*\setminus M)\). It is the algebra of section of the continuous field of \(C^*\)-algebras formed by the \((C^*_0(T_{H,x}M))_{x\in M}\) (seen as a subfield of the one underlying \(C^*(T_HM)\). We want to express \(C^*_0(T_HM)\) as a crossed product, gluing our previous construction with all the osculating groups at the same time.

Consider for each \((x,\theta)\in H^{\perp}\setminus M\), the form \(\omega_{\theta}\in \Lambda^2H_x\) obtained by composition with the curvature form (i.e. the Lie bracket of \(\ttt_HM\)). Since we are on a polycontact manifold, \(\omega_{\theta}\) is a symplectic form. We therefore get a symplectic bundle \(H \times_M (H^{\perp}\setminus M) \to H^{\perp}\setminus M\). We may fix a metric on \(\faktor{TM}{H}\), hence on \(H^{\perp}\), and restrict the bundle to \(H \times_M \mathbb{S}H^{\perp} \to \mathbb{S}H^{\perp}\). Denote by \(\pi \colon \mathbb{S}H^{\perp} \to M\) the bundle map, so that the previous bundle becomes \(\pi^*H\). For any given \(x\in M\), we can restrict the bundle to \(H_x \times \mathbb{S}H^{\perp}_x \to  \mathbb{S}H^{\perp}_x\), which is exactly the symplectic bundle for the osculating group \(T_{H,x}M\).

Now that we have a symplectic bundle, we may choose a compatible complex structure \(J \in \End(\pi^*H)\) and the corresponding \(C^*\)-algebra of sections of the Toeplitz bundle \(\T(\ttt_HM,J)\). The symplectic bundle restricts over each \(\mathbb{S}H_x^{\perp}), x\in M\) to the one of the osculating group. We can thus see \(\T(\ttt_HM,J)\) as the algebra of sections of a continuous field of \(C^*\)-algebras over \(M\), whose fiber at \(x \in M\) is:
\[\T(\ttt_HM,J)_x = \T(\ttt_{H,x}M,J_x),\] 
where \(J_x = J_{|H_x \times \mathbb{S}H_x^{\perp}}\).
We denote by \(\T_0(\ttt_HM,J)\) the subalgebra of \(\T(\ttt_HM,J)\) consisting of sections whose value at \(x \in M\) is in \(\T_0(\ttt_{H,x}M,J_x)\). We have the exact sequence:
\[\xymatrix{0 \ar[r] & \CCC_0(M,\mathcal{K}) \ar[r] & \T_0(\ttt_HM,J) \ar[r] & \CCC_0(\mathbb{S}H^*) \ar[r] & 0.}\]

\begin{thm}\label{Theorem: Symbols Osculating}
	There is an isomorphism of \(C^*\)-algebras:
	\[\T_0(\ttt_HM,J) \cong \Sigma(T_HM).\]
\end{thm}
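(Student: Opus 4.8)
The plan is to globalize Theorem \ref{Theorem: Symbols group}, which already gives the fiberwise isomorphism $\Sigma(T_{H,x}M) \cong \T_0(\ttt_{H,x}M, J_x)$ at each point $x \in M$. Both sides are algebras of continuous sections of continuous fields of $C^*$-algebras over $M$ — the left by the description of $\Sigma(T_HM)$ as principal symbols in the filtered calculus (which restrict to principal symbols on each osculating group), the right by definition of $\T_0(\ttt_HM, J)$ — so it suffices to produce a morphism between these fields that is the identity on the base and restricts to the fiberwise isomorphism of Theorem \ref{Theorem: Symbols group}, and then check it is an isomorphism on sections.

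First I would use the explicit form of the fiberwise isomorphism recalled in Remark \ref{Remark: Explicit Isom Group}. Fix a global section $\Delta \in \Sigma^2(T_HM)$ with $\pi_\theta(\Delta_x) = N+1$ on each symmetric Fock space (such a $\Delta$ exists: after choosing the compatible complex structure $J$ on $\pi^*H$, the operator $N+1$ on the bundle of Fock spaces is a well-defined positive elliptic symbol of order $2$, compare with the construction of a sublaplacian). Then for $W \in \Gamma((\ttt_HM, J)^{1,0})$ the assignment $S_W \mapsto W(1+\Delta)^{-1/2}$ defines, fiber by fiber, the isomorphism of Theorem \ref{Theorem: Symbols group}; since $\Delta$ and the sections $W$ vary continuously over $M$, this assignment is continuous and hence extends to a $*$-homomorphism $\T_0(\ttt_HM, J) \to \Sigma(T_HM)$ of the section algebras (the target algebra being generated, as a field, by such $W(1+\Delta)^{-1/2}$ together with the fiberwise compact ideal $\CCC_0(M,\mathcal{K})$, on which the map is the identity).

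To see this is an isomorphism I would run the five lemma on the commuting diagram of extensions: on the left, the ideal $\CCC_0(M,\mathcal{K})$ maps identically to the ideal $\CCC(\mathbb{S}H^*)$ — wait, more precisely the ideals $\CCC_0(M,\mathcal K)$ match (the compact-operator ideals of both fields coincide, both being continuous fields of compacts on the trivializable Fock bundle), and on the quotients both extensions have quotient $\CCC_0(\mathbb{S}H^*)$, with the induced map being the identity (it sends the class of $S_W$ to the class of $W$ as a homogeneous function on $\mathbb{S}H^*$, by the compatibility stated just before Theorem \ref{Theorem: Symbols group} and its analogue fiberwise). Since the outer two vertical maps are isomorphisms and the diagram commutes with exact rows, the middle map is an isomorphism.

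The main obstacle is the continuity at the zero section of $H^\perp$ — i.e. making precise that the fiberwise isomorphisms of Theorem \ref{Theorem: Symbols group} assemble into a morphism of \emph{continuous} fields rather than merely a fiberwise family of isomorphisms. This is handled by working with the generators: the sections $W(1+\Delta)^{-1/2}$ and the fiberwise compacts are honest continuous sections of $\Sigma(T_HM)$, and conversely $\Sigma(T_HM)$ is generated by these (as the symbol calculus is local and these generate on each fiber), so continuity of the \emph{morphism} follows from continuity of the images of generators, which is immediate since $\Delta$, $J$, and the vectors $W$ are smooth in $x$. One should also note that the choice of $J$ and of $\Delta$ affects the isomorphism only up to an inner automorphism coming from a unitary section (as in the remarks following the definition of $\T(\g,J)$), so the statement is independent of these choices.
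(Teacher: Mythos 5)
Your proposal is correct and follows essentially the paper's own route: the paper likewise reduces to the single-group statement (Theorem \ref{Theorem: Symbols group}) by viewing \(\Sigma(T_HM)\) as sections of the field of Weyl/Fock-quantized operator algebras over \(\mathbb{S}H^{\perp}\) with constant principal part, and matches the two extensions with ideal the field of compacts and quotient \(\CCC(\mathbb{S}H^*)\). Your use of the explicit generators \(S_W \mapsto W(1+\Delta)^{-1/2}\) from Remark \ref{Remark: Explicit Isom Group} plus the five lemma is just a slightly more explicit packaging of the same argument.
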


\begin{proof}
Similar as for a single group. The algebra \(\Sigma(T_HM)\) is the subalgebra of sections of Weyl operators \(\mathcal{W}^0(H_x^*,\omega^*_{\theta}), (x,\theta)\in \mathbb{S}H^{\perp}\) such that the corresponding field of principal symbols is constant on each \(\mathbb{S}H_x^{\perp}\). This is then the same as Theorem \ref{Theorem: Symbols group}.
\end{proof}

Like on a group, we can define the number operator. The algebra \(\T(\ttt_HM,J)\) acts on the Hilbert bundle of symmetric Fock spaces \(\mathcal{F}^+((\pi^*H,J)^{1,0})\). The number operator \(N\) acts on \(\Sym^k((\pi^*H,J)^{1,0})\) by \(k\Id\) and defines this way an unbounded operator. Its complex powers define a strongly continuous group of bounded operators. As for the group, conjugation by \(N^{\ii t/2}, t\in \R\) defines a group of automorphisms of \(\T(\ttt_HM,J)\) and \(\T_0(\ttt_HM,J)\).

\begin{thm}\label{Theorem: Toeplitz to Osculating}
	Let \((M,H)\) be a compact polycontact manifold. Let \(J\in \End(\pi^*H)\) be a compatible complex structure on the symplectic bundle. Then we have an \(\R^*_+\)-equivariant isomorphism:
	\[\T_0(\ttt_HM,J) \rtimes \R \cong C^*_0(T_HM).\]
\end{thm}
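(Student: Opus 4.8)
The strategy is to bootstrap from the corresponding statement for a single regular 2-step nilpotent group, namely the isomorphism $\T_0(\g,J)\rtimes_\alpha\R\cong C^*_0(G)$ of Theorem~\ref{Theorem: Toeplitz to Group}, and to assemble these fiberwise isomorphisms into an isomorphism of continuous fields over $M$. Concretely, for each $x\in M$ the osculating group $T_{H,x}M$ is a regular 2-step nilpotent group, the restricted bundle $H_x\times\mathbb{S}H_x^\perp\to\mathbb{S}H_x^\perp$ is precisely its symplectic bundle $\mathbb{S}^*\g_2\times\g_1$, and $J_x$ is a compatible complex structure on it. Theorem~\ref{Theorem: Toeplitz to Group} then gives an isomorphism $\T_0(\ttt_{H,x}M,J_x)\rtimes_\alpha\R\cong C^*_0(T_{H,x}M)$, which is moreover $\R^*_+$-equivariant. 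So one has a candidate map on each fiber; the work is to show it glues to a map of $C_0(M)$-algebras and is a global isomorphism.

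First I would recall that $C^*_0(T_HM)$ is by construction the section algebra of the continuous field $(C^*_0(T_{H,x}M))_{x\in M}$ (the subfield of $C^*(T_HM)$ cut out in Proposition~\ref{Prop: Extension CStar Osculating}), and likewise $\T_0(\ttt_HM,J)\rtimes\R$ is the section algebra of the field $(\T_0(\ttt_{H,x}M,J_x)\rtimes_\alpha\R)_{x\in M}$, using that the crossed product by $\R$ commutes with taking continuous fields (the $\R$-action $\alpha$ is defined globally via the number operator $N$ on the Hilbert bundle $\F^+((\pi^*H,J)^{1,0})$ and restricts fiberwise). So it suffices to produce an isomorphism of these two continuous fields over $M$, i.e.\ a family of fiberwise isomorphisms that carries continuous sections to continuous sections in both directions. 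The natural way to do this is via the explicit form of the isomorphism recorded in Remark~\ref{Remark: Explicit Isom Group}: fix a global order-$2$ symbol $\Delta\in\Sigma^2(T_HM)$ with $\pi_\theta(\Delta)=N+1$ fiberwise (using Theorem~\ref{Theorem: Symbols Osculating}, $\Sigma(T_HM)\cong\T_0(\ttt_HM,J)$, and choosing $\Delta$ to be the image of the global number operator), send the generating shift sections $S_W$, $W\in\Gamma((\pi^*H,J)^{1,0})$, to $W(1+\Delta)^{-1/2}\in\Sigma^0(T_HM)\subset M(C^*_0(T_HM))$, and send the generators $\Delta^{\ii t/2}$ of the $\R$-action to the corresponding multipliers; these assignments are given by global formulas, so the resulting $*$-homomorphism is manifestly continuous on sections, and on each fiber it reduces to the isomorphism of Theorem~\ref{Theorem: Toeplitz to Group}.

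With this setup the remaining steps are: (i) check the candidate map $\T_0(\ttt_HM,J)\rtimes\R\to C^*_0(T_HM)$ is well-defined (the global shift sections and $\Delta^{\ii t/2}$ satisfy the same relations as fiberwise, and land in the multiplier algebra of $C^*_0(T_HM)$ with range actually inside $C^*_0(T_HM)$ — here one uses, as in Remark~\ref{Remark: Explicit Isom Group}, that $\Sigma(T_HM)$ embeds in multipliers of $C^*_0(T_HM)$, which follows globally because the filtered-calculus symbols are defined on all of $M$); (ii) check it is a bijection — injectivity and surjectivity both follow from the fiberwise statement together with the fact that a morphism of continuous fields over a fixed base which is an isomorphism on every fiber is an isomorphism; (iii) check $\R^*_+$-equivariance for the dual action on the left and $\delta^*$ on the right, which again holds fiberwise by the last sentence of Theorem~\ref{Theorem: Toeplitz to Group} and hence globally since the dilations $\delta_\lambda$ act compatibly on the whole field. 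Compactness of $M$ is used only to guarantee that the section algebras are the $C_0$-ones with no behavior at infinity to worry about, and that $\Delta$ and its complex powers behave as genuine multipliers.

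The main obstacle I expect is step (i)—establishing that the global formula $S_W\mapsto W(1+\Delta)^{-1/2}$ really defines a $*$-homomorphism into $C^*_0(T_HM)$ rather than merely into its multiplier algebra, and that it is compatible with the $\R$-action $\alpha$ so that it descends to the crossed product. Fiberwise this is exactly the content of \cite[Theorem 5.9]{CrenToeplitz} and Theorem~\ref{Theorem: Toeplitz to Group}, but globally one must verify that $\Delta$ can be chosen smoothly (via Theorem~\ref{Theorem: Symbols Osculating}, taking the image of the global number operator $N$ under $\Sigma(T_HM)\cong\T_0(\ttt_HM,J)$) and that the multiplier property $\Sigma^k(T_HM)\subset M(C^*_0(T_HM))$ holds in the field setting; this last point is the one genuine input beyond assembling fibers, and it rests on the general structure of the filtered pseudodifferential calculus on $T_HM$. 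Once that is in hand, everything else is a routine "isomorphism on every fiber $\Rightarrow$ isomorphism of fields" argument over the compact base $M$, together with transporting the equivariance statement fiber by fiber.
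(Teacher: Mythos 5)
Your proposal is correct and takes essentially the same route as the paper: the paper also reduces to the single-group/Heisenberg result of \cite{CrenToeplitz}, globalizing its \(U(n)\)- and \(\R^*_+\)-equivariant form over the symplectic bundle (identifying \(C^*_0(T_HM)\) inside the section algebra \(\widetilde{C}^*_0(T_HM)\) of the field \((I_{+,0}(H_x,\omega_{\theta}))_{(x,\theta)\in\mathbb{S}H^{\perp}}\) and cutting down by the constant-quotient condition), which is the same reduction you perform fiberwise over \(M\). Your explicit realization via a global \(\Delta\in\Sigma^2(T_HM)\) with \(\pi_\theta(\Delta)=N+1\) and the map \(S_W\mapsto W(1+\Delta)^{-1/2}\) --- including its role in explaining the compactness hypothesis --- is precisely what the paper records in the remark following the theorem (only note that \(\Delta\) is an unbounded order-\(2\) symbol, not literally an element of \(\Sigma(T_HM)\cong\T_0(\ttt_HM,J)\)).
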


\begin{proof}
As for the group case, we identify \(C^*_0(T_HM)\) with a subalgebra of sections of the bundle \((I_{+,0}(H_x,\omega_{\theta}))_{(x,\theta)\in \mathbb{S}H_x^{\perp}}\). These sections must satisfy that, for \(x\in M\) fixed, their restriction at \(0\) in \(I_{+,0}(H_x,\omega_{\theta}), \theta \in \mathbb{S}H_x^{\perp}\) is constant as a family of elements of \(\CCC_0(H_x^*\setminus 0\).

Let \(\widetilde{C}^*_0(T_HM)\) be the algebra of (all the) sections of the bundle of \(C^*\)-algebras \((I_{+,0}(H_x,\omega_{\theta}))_{(x,\theta)\in \mathbb{S}H_x^{\perp}}\). We want to use the same arguments as in \ref{Theorem: Toeplitz to Group} to show an isomorphism \(\T(\ttt_HM,J)\rtimes \R \isomto \widetilde{C}^*_0(T_HM)\). This is similar to \cite[Theorem 7.7]{CrenToeplitz}.  
\end{proof}

\begin{rem}
	The construction of the map \(\T(V^{1,0}) \rtimes \R \to I_{+,0}(V,\omega)\) amounts to the existence of an operator \(\Delta \in \Sigma^2(\Heis(V,\omega))\) such that \(\pi_{\lambda}(\Delta) = \lambda^{2}(N+1)\) for every positive \(\lambda\) and then send \(S_X\) to \(-\ii X \Delta^{-1/2}\) (compare with Remark \ref{Remark: Explicit Isom Group}). In our situation, we would need such a \(\Delta\) for each point \((x,\theta)\). In case \(M\) is not compact, we also need to control the norm of these type of elements at infinity and it is not clear it should decay at infinity. This is the reason we only state the previous theorem when \(M\) is compact (similarly as in the previous \cite{CrenToeplitz}).
\end{rem}

\section{The case of H-type manifolds}\label{Section: H-type manifolds}


\begin{mydef}
    A polycontact manifold \((M,H)\) is called a H-type manifolds if all its osculating groups are H-type groups.
\end{mydef}

A geometric characterization of these manifolds is given by the following:

\begin{prop}[\cite{KaplanSubilsEquivalence} Proposition 5.1]
    A distribution \(H \subset TM\) makes \(M\) a H-type manifold if and only if it admits a compatible (sub)conformal structure.
\end{prop}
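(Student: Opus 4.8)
The plan is to prove the two implications separately; the implication from a compatible (sub)conformal structure to the H-type property is elementary, while the converse rests on a patching argument.

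\emph{From a compatible structure to H-type.} Fix a local representative metric \(g\) of the conformal class on \(H\), together with the induced representative metric on \(TM/H\). For a local section \(\theta\) of \((TM/H)^*\) let \(\omega_\theta \in \Lambda^2 H^*\) be the contraction of the curvature form with \(\theta\) and let \(J_\theta \in \End(H)\) be defined by \(g(J_\theta X, Y) = \omega_\theta(X,Y)\), so that \(J_\theta\) is \(g\)-skew and depends linearly on \(\theta\). Compatibility means precisely that \(J_\theta\) is \(g\)-orthogonal, hence a complex structure, whenever \(|\theta| = 1\); by linearity \(J_\theta^2 = -|\theta|^2 \Id\) for all \(\theta\), and polarizing in \(\theta\) yields the Clifford relations \(J_\theta J_\eta + J_\eta J_\theta = -2\langle\theta,\eta\rangle\,\Id\) exactly as in the discussion of H-type groups above. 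Thus each fibre \(H_m\) becomes a unitary \(\Cl(T_mM/H_m)\)-module, which by the Clifford-module dictionary recalled there is precisely the statement that the osculating Lie algebra \(\ttt_{H,m}M\) is of H-type. As \(m\) was arbitrary, \((M,H)\) is an H-type manifold.

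\emph{From H-type to a compatible structure.} Suppose every osculating group \(T_{H,m}M\) is of H-type. By the rigidity result of Kaplan--Subils \cite{KaplanSubils}, a fixed H-type Lie algebra carries an H-type metric which is unique up to a positive scalar and an automorphism; in particular, at each \(m\) the set \(\mathcal{M}_m\) of inner products on \(H_m\) that make \(\ttt_{H,m}M\) H-type (the metric on the centre being then determined by the Clifford relations above) is a single orbit of \(\R^*_+ \times \Aut_0(\ttt_{H,m}M)\). One checks that the stabiliser of a point of this orbit is the compact group of isometric automorphisms, which is a maximal compact subgroup, so that by the Cartan decomposition \(\mathcal{M}_m\) is diffeomorphic to a Euclidean space and in particular contractible. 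As \(m\) varies, the \(\mathcal{M}_m\) assemble into a subbundle \(\mathcal{M} \to M\) of the convex bundle of Riemannian metrics on \(H\), with local triviality following from that of the osculating family together with the rigidity statement. A fibre bundle with contractible fibres admits a smooth global section \(g\); equipping \(TM/H\) with the metric that \(g\) determines fibrewise through the Clifford relations, the conformal class \([g]\) together with the induced conformal class on \(TM/H\) is, by construction, a compatible (sub)conformal structure.

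The main obstacle is this converse direction, and more precisely the two points that make the patching go through: identifying the fibre \(\mathcal{M}_m\) and verifying its contractibility (equivalently, that the isometric automorphisms form a maximal compact subgroup of \(\R^*_+ \times \Aut_0(\ttt_{H,m}M)\)), and checking local triviality of \(\mathcal{M}\). The purely pointwise ingredients — the polarization identity, the equivalence between ``Clifford module'' and ``H-type'', and the rigidity of a single H-type Lie algebra — are already available (the first two in the preceding sections, the last quoted from \cite{KaplanSubils}). A more computational alternative to the bundle argument is to start from an arbitrary smooth auxiliary metric \(h\) on \(H\), to write the H-type condition at each point as an algebraic equation \(A^h_\theta S^{-1} A^h_\theta = -q(\theta)\,S\) for a positive \(h\)-symmetric \(S\), where \(A^h_\theta \in \End(H)\) is the smooth family of \(h\)-skew operators attached to the curvature, and then to produce a solution depending smoothly on \(m\), the existence of a solution at every point ruling out the degenerations that would otherwise obstruct smoothness.
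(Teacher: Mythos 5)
The paper itself offers no proof of this statement: it is quoted verbatim from Proposition 5.1 of \cite{KaplanSubilsEquivalence}, so there is no internal argument to compare yours with, and your proposal has to be judged on its own. Your first implication is fine and is essentially the Clifford-module dictionary already set up in the H-type section: with a representative metric the operators \(J_\theta\) are skew, compatibility makes them orthogonal on the unit sphere, polarization gives the Clifford relations, and a unitary \(\Cl(T_mM/H_m)\)-module structure on \(H_m\) is exactly the H-type condition for \(\ttt_{H,m}M\) (with the metric on \(TM/H\) recovered from \(J_\theta^2=-|\theta|^2\Id\), which also takes care of smoothness of that metric).

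The converse is where the content lies (pointwise existence of an H-type inner product must be upgraded to a smooth global conformal class), and there your argument is reduced to two claims that are asserted rather than proved, and both are genuine gaps as written. First, contractibility of the fibre \(\mathcal{M}_m\): Kaplan--Subils rigidity does give a single orbit, but of the full automorphism group, not of \(\R^*_+\times\Aut_0\), and the statement that the isometric automorphisms are a \emph{maximal} compact subgroup is precisely where the structure theory of automorphism groups of H-type algebras (Riehm, Saal) has to enter. It is not formal: for centres of dimension \(1\) or \(2\) the graded automorphism group has a noncompact semisimple part (e.g.\ \(\mathrm{Sp}(2n,\R)\) for the Heisenberg algebra), and for a compact but non-maximal stabiliser an orbit \(G/K\) need not be contractible; ``one checks'' hides the actual work (equivalently, one must show directly that the solution set of your equation \(A^h_\theta S^{-1}A^h_\theta=-q(\theta)S\) is contractible). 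Second, local triviality of \(\mathcal{M}\to M\): you derive it ``from that of the osculating family'', but local triviality of the osculating family is itself a nontrivial theorem -- in this paper it is obtained later by combining the classification of H-type algebras by \((\dim\g_2,\dim\g_1)\) with Kiranagi's theorem that a smooth family of mutually isomorphic Lie algebras underlying a vector bundle is a locally trivial bundle of Lie algebras. Citing that result here would not be circular (its proof does not use the present proposition), and with these two inputs supplied your obstruction-theoretic selection of a section and the passage to the conformal class do go through; but as it stands the proposal is an outline whose two pivotal steps -- the very ones you flag -- are missing.
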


\begin{ex}
    Contact, quaternionic contact and octonionic contact manifolds are all examples of H-type manifolds.
\end{ex}

\begin{lem}\label{Lemma: H-type rigidity}
    If \((M,H)\) is a H-type manifold then all its osculating groups are mutually isomorphic.
\end{lem}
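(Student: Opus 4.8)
The plan is to show that $M$ being connected forces all osculating groups to lie in a single isomorphism class, by exploiting the classification of H-type Lie algebras obtained earlier in the excerpt. Recall that an H-type Lie algebra is completely determined up to isomorphism by the pair $(\dim(\g_2),\dim(\g_1))$; these two integers are locally constant on $M$ (the first because $\dim H^\perp$ is locally constant, the second because $\dim H$ is), hence constant if $M$ is connected. So the whole difficulty is already concentrated in the fact, proved above via the Clifford-module description, that these two dimensions form a complete invariant. I would therefore first reduce to the connected case (if $M$ is disconnected the statement is vacuous or component-wise), and then argue as follows.

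First I would recall that by the proposition characterizing polycontact manifolds, each osculating group $T_{H,x}M$ is a regular $2$-step nilpotent group, and by hypothesis each is of H-type; by Kaplan's definition together with the computation in the H-type groups section, the data of (a metric representative of) $T_{H,x}M$ is equivalent to a $\Cl(H_x^{\perp*})$-module structure on $H_x$, with $\dim H_x^\perp = N_x+1$ and $\dim H_x = 2n_x$. Next, since $H\subset TM$ and $H^\perp\subset T^*M$ are smooth subbundles of constant rank over each connected component, the integers $N_x$ and $n_x$ do not depend on $x$; write them $N$ and $n$. Then I would invoke the already-established classification: an H-type Lie algebra is classified up to isomorphism by $(\dim\g_2,\dim\g_1)=(N+1,2n)$ — this is exactly the combination of the Kaplan--Ricci result on the unique irreducible H-type algebra of given center dimension, together with the observation (proved in the excerpt) that complementary Clifford submodules of $\g_1$ bracket to zero, so that the general H-type algebra is built from $\dim\g_1/\dim(\text{irreducible})$ copies of the irreducible one. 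Consequently all the $T_{H,x}M$, $x\in M$, are isomorphic as Lie algebras, hence (being connected and simply connected) as Lie groups.

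The main subtlety — and the only place any real care is needed — is that an isomorphism of H-type Lie algebras need not be the identity on the center, so one cannot simply read off ``isomorphic'' from ``isomorphic Clifford modules''; this is precisely the gap that the excerpt closes by reducing to dimensions alone, and I would make sure to cite that discussion rather than re-derive it. A secondary point to state cleanly is local constancy of the rank of a smooth subbundle, which is immediate. I do not expect to need any connectedness-of-the-sphere argument here (that was relevant to trivializing the symplectic bundle, not to the isomorphism type of the fibers), so the proof is genuinely short: it is a one-line consequence of ``the invariant $(\dim\g_1,\dim\g_2)$ is locally constant, hence constant, and it is a complete invariant.''

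\begin{proof}
We may assume $M$ is connected. For each $x\in M$ the osculating group $T_{H,x}M$ is, by hypothesis, an H-type group, so by the discussion above it corresponds to a $\Cl(H_x^{\perp})$-module structure on $H_x$. Since $H\subset TM$ and $H^{\perp}\subset T^*M$ are smooth subbundles, the integers $\dim H_x$ and $\dim H_x^{\perp}$ are locally constant on $M$, hence constant; denote them $2n$ and $N+1$. As recalled in the H-type groups section, an H-type Lie algebra is classified up to isomorphism by the pair $(\dim\g_1,\dim\g_2)$: the unique irreducible H-type Lie algebra with center of dimension $N+1$ is fixed by \cite{KaplanRicci}, complementary Clifford submodules of $\g_1$ have vanishing bracket, and an isomorphism of Lie algebras need not be the identity on the center, so only the dimensions matter. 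Therefore the Lie algebras $\ttt_{H,x}M$ all lie in a single isomorphism class, and being the Lie algebras of connected, simply connected groups, the groups $T_{H,x}M$ are mutually isomorphic.
\end{proof}
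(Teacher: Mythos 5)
Your proof is correct and takes essentially the same route as the paper's: both rest on the fact, established earlier in the text, that an H-type Lie algebra is classified up to isomorphism by $(\dim\g_1,\dim\g_2)$, combined with the constancy of the rank and corank of $H$. Your version merely spells out the connectedness reduction and the passage from isomorphic Lie algebras to isomorphic simply connected groups, which the paper leaves implicit.
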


\begin{proof}
    The isomorphism class of an H-type Lie algebra is completely characterized by the dimension and codimension of its center. The distribution \(H\subset TM\) having constant rank (and co-rank), if the osculating groups are all H-type, they have to be mutually isomorphic.
\end{proof}

\begin{thm}
    If \((M,H)\) is a H-type manifold then \(T_HM\) is a locally trivial bundle of Lie groups.
\end{thm}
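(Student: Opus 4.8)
The plan is to upgrade the fiberwise isomorphism of osculating Lie algebras (Lemma \ref{Lemma: H-type rigidity}) to a local triviality statement for the bundle of Lie groups, by exhibiting local trivializations of the bundle of Lie \emph{algebras} $\ttt_HM = H \oplus \faktor{TM}{H}$ that preserve the bracket; the Baker--Campbell--Hausdorff formula then transports these to trivializations of $T_HM$ as a bundle of Lie groups, since BCH is a universal polynomial expression in the bracket and a bundle isomorphism intertwining the brackets automatically intertwines the group multiplications.

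First I would reduce the problem to trivializing the family of Clifford module structures. As established in the section on H-type groups, an H-type Lie algebra structure on $\ttt_{H,x}M$ (after choosing a metric, which exists locally) is exactly the data of a $\Cl(\faktor{T_xM}{H_x})$-module structure on $H_x$, and an isomorphism of Lie algebras fixing the center corresponds to an isomorphism of Clifford modules. So I would work locally on a contractible open set $U \subset M$: choose a smooth metric on $TM|_U$ making $H|_U$ and $(\faktor{TM}{H})|_U$ orthogonal, which by the H-type hypothesis and the proposition of Kaplan--Subils cited above can be taken compatible, so that the maps $J_z \in \End(H_x)$, $z \in (\faktor{T_xM}{H_x})^*$, $|z|=1$, are orthogonal. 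Trivializing $\faktor{TM}{H}|_U$ by a smooth orthonormal frame $z_1(x), \dots, z_{N+1}(x)$ identifies all the Clifford algebras $\Cl(\faktor{T_xM}{H_x})$ with a fixed $\Cl_{N+1}$, and the operators $J_{z_i(x)}$ become a smoothly varying family of anticommuting complex structures on $H_x$ satisfying the Clifford relations; this is a smooth $\Cl_{N+1}$-module bundle structure on $H|_U$.

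The key step is then: any smooth family of Clifford module structures over a contractible base is smoothly trivial, i.e.\ there is a smooth family $\Phi_x \in \GL(H_x)$ (after a further reduction, $\Phi_x \in \mathrm{O}(H_x)$) intertwining the Clifford actions with a fixed reference module. This I would obtain by the standard argument that the Clifford module structures are sections of a bundle whose fiber is the homogeneous space $\GL(V)/\Aut_{\Cl}(V)$ (or its compact form), which is smooth, and over a contractible base such a bundle admits a smooth section relative to any fixed trivialization; equivalently, one can average / use the fact that the space of module structures compatible with a given metric, for a fixed isomorphism type, is a connected homogeneous space and the associated principal bundle is trivial over $U$. Pulling back the fixed reference structure along $\Phi_x$ and combining with the frame $z_i(x)$ gives a smooth bundle isomorphism $\ttt_HM|_U \cong U \times \g$ that is fiberwise a Lie algebra isomorphism, where $\g$ is the model H-type algebra with $(\dim \g_2, \dim \g_1) = (\operatorname{corank} H, \operatorname{rank} H)$.

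**The main obstacle** will be the smoothness of the trivialization of the Clifford module family — i.e.\ producing $\Phi_x$ depending smoothly on $x$, not merely continuously or pointwise. The pointwise classification of Clifford modules guarantees abstract isomorphisms fiber by fiber, but one must check that these can be chosen smoothly; the cleanest route is to recognize the family as a smooth principal $\Aut_{\Cl}(\g_1)$-bundle over $U$ (via the frame bundle of $H|_U$ reduced by the Clifford structure) and invoke that principal bundles over contractible (paracompact) bases are smoothly trivial. Once the Lie algebra bundle $\ttt_HM$ is locally trivialized, applying BCH fiberwise — and noting that BCH commutes with Lie algebra isomorphisms, so the fiberwise group structures on $U \times G$ coincide with the constant one — yields that $T_HM|_U \cong U \times G$ as bundles of Lie groups, completing the proof.
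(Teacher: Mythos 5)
Your proposal is correct, but it takes a genuinely different route from the paper. The paper disposes of the theorem in one line: by Lemma \ref{Lemma: H-type rigidity} all osculating algebras are mutually isomorphic, and it then invokes a general theorem of Kiranagi asserting that a smooth family of Lie algebras carried by a (locally trivial) vector bundle with mutually isomorphic fibers is automatically a locally trivial bundle of Lie algebras, the passage from the algebra bundle to the group bundle via Baker--Campbell--Hausdorff being left implicit. You instead give a self-contained argument tailored to the H-type situation: the compatible (sub)conformal structure of Kaplan--Subils provides a smooth local metric, a local orthonormal frame of \(\faktor{TM}{H}\) converts the osculating brackets into a smooth family of orthogonal \(\Cl_{N+1}\)-module structures on \(H|_U\), and local triviality is reduced to lifting a smooth map into the homogeneous space \(\mathrm{O}(2n)/K\) (with \(K\) the group of orthogonal Clifford module automorphisms) through the principal \(K\)-bundle \(\mathrm{O}(2n)\to \mathrm{O}(2n)/K\), which is possible over a contractible base; BCH then transports the trivialization to the groups, exactly because the algebras are \(2\)-step nilpotent so a bracket-preserving bundle map intertwines the (polynomial) group laws. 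What your approach buys is an explicit trivialization and independence from the cited general result, at the price of using the H-type geometry; the paper's argument is shorter and would apply verbatim to any polycontact manifold whose osculating algebras happen to be mutually isomorphic, H-type or not. One point you should make explicit: you need the isomorphism type of the Clifford module (the multiplicities of the possibly inequivalent irreducible \(\Cl_{N+1}\)-modules) to be locally constant along the family, since two fibers could a priori carry inequivalent module structures while still being isomorphic as Lie algebras; this follows from semisimplicity of \(\Cl_{N+1}\), the multiplicities being ranks of continuously varying idempotents, and it is what guarantees that your family lies in a single \(\mathrm{O}(2n)\)-orbit so that the lifting argument applies.
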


\begin{proof}
    This is a consequence of a more general fact, that a (smooth) family of Lie algebras that form a (locally trivial) vector bundle and where all the fibers are isomorphic has to be a locally trivial bundle of Lie algebras \cite{Kiranagi}.
\end{proof}

\begin{rem}
    In that context where the osculating groups form a locally trivial group bundle, we should probably even have local diffeomorphisms to the typical fiber, in a way that preserves the respective filtrations. This would be in contrast with some rigidity results known in quaternionic contact geometry. It is indeed proved in \cite{IvanovVassilevLocalInvariantQuaternionicContact} that for those manifolds, the curvature of the Biquard connection \cite{BiquardQuaternionicContact} prevents a quaternionic contact manifold to be isomorphic to the local model \(\mathbb{H}^n \times \Im (\mathbb{H})\). This notion of isomorphism however requires to preserve the conformal class on the metric on the distribution, which is stronger than the local isomorphism we want.
    
    Compare also this claim with \cite{MontgomeryGenericDistributions} where it's proved that for generic distributions, finding a local frame that generates a finite dimensional Lie algebra (which is equivalent to our claim) is only possible in very specific cases. However here we are not asking for generic distributions since requiring to have the same osculating group everywhere (at least locally) is quite rigid.
\end{rem}



\begin{cor}
    If \((M,H)\) is a H-type manifold with typical osculating group \(G\), then the algebras \(\Sigma(T_HM), C^*(T_HM), C^*_0(T_HM)\) are algebras of sections of locally trivial bundles of \(C^*\)-algebras with respective fiber \(\Sigma(G), C^*(G), C^*_0(G)\).
\end{cor}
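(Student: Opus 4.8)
The plan is to upgrade the fiberwise identifications already obtained for a single regular 2-step group (Theorems \ref{Theorem: Symbols group}, \ref{Theorem: Toeplitz to Group} and Corollary \ref{Corollary: Extension CStar Group}) to the level of sections, using the local triviality of the group bundle $T_HM$ established in the previous theorem. By that theorem and Lemma \ref{Lemma: H-type rigidity}, around each point $x_0 \in M$ there is an open set $U$ and a fiberwise Lie group isomorphism $T_HM_{|U} \cong U \times G$ which preserves the gradings (equivalently, a bundle chart for $\ttt_HM$ intertwining the fiberwise brackets with the fixed bracket of $\g$). Pulling back the metric fixed on $\faktor{TM}{H}$, such a chart identifies the symplectic bundle $H \times_M \mathbb{S}H^\perp_{|U} \to \mathbb{S}H^\perp_{|U}$ with $U \times (\g_1 \times \mathbb{S}^*\g_2) \to U \times \mathbb{S}^*\g_2$ as a \emph{symplectic} bundle (up to the conformal rescaling coming from the change of metric on the one-dimensional-over-the-sphere normal directions, which is harmless since compatibility of complex structures is conformally invariant).

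First I would record that over such a $U$ one gets canonical isomorphisms of the continuous fields: $C^*(T_HM)_{|U} \cong \CCC_0(U) \otimes C^*(G)$, and likewise for $C^*_0(T_HM)$, simply because the osculating groupoid is locally $U \times U \times G$ (a product of a pair groupoid with a group) so its $C^*$-algebra is $\CCC_0(U\times U)\otimes C^*(G) \cong \mathcal{K}(L^2 U)\otimes C^*(G)$, but working with the isotropy bundle directly gives the cleaner statement $\CCC_0(U)\otimes C^*(G)$; the ideal/quotient decomposition of Corollary \ref{Corollary: Extension CStar Group} is natural in $G$ for graded isomorphisms, hence globalizes. Second, for the symbol algebra, Theorem \ref{Theorem: Symbols Osculating} already identifies $\Sigma(T_HM)$ with $\T_0(\ttt_HM,J)$; over $U$ the trivialization of the symplectic bundle and the $U(n)$-invariance of the Toeplitz construction give $\T_0(\ttt_HM,J)_{|U}\cong \CCC_0(U)\otimes\T_0(\g,J_{cst}) = \CCC_0(U)\otimes\Sigma(G)$. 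Third, for the crossed product, Theorem \ref{Theorem: Toeplitz to Osculating} gives $\T_0(\ttt_HM,J)\rtimes\R \cong C^*_0(T_HM)$ globally (using compactness of $M$), and since the $\R$-action comes from the number operator $N$ which is defined fiberwise and is intertwined by the bundle charts, restricting to $U$ recovers $\CCC_0(U)\otimes(\T_0(\g,J)\rtimes\R)\cong\CCC_0(U)\otimes C^*_0(G)$, consistent with the second point. So over each chart all three algebras are $\CCC_0(U)$-tensored copies of the corresponding algebra for $G$.

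Finally I would assemble these local descriptions into genuine locally trivial $C^*$-bundles. The point is that the transition maps: on overlaps $U\cap U'$ the two graded group isomorphisms to $G$ differ by a graded automorphism of $G$, i.e. a continuous map $U\cap U'\to \Aut_{gr}(G)$, and all three functors $G\rightsquigarrow \Sigma(G), C^*(G), C^*_0(G)$ are functorial for graded automorphisms (Kirillov theory and the Toeplitz picture are natural), so these give continuous cocycles with values in $\Aut(\Sigma(G))$, $\Aut(C^*(G))$, $\Aut(C^*_0(G))$ respectively. Hence the fields are locally trivial bundles of $C^*$-algebras with the stated fibers, and the algebras in the statement are their section algebras (vanishing at infinity, though $M$ is compact). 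The main obstacle I anticipate is purely bookkeeping: checking that the bundle chart for $\ttt_HM$ can be chosen to respect the \emph{metric} data (or, equivalently, that the mismatch is only a conformal factor on $\g_2$) so that it genuinely trivializes the symplectic bundle and commutes with the chosen compatible complex structures up to $U(n)$; once the chart is symplectic, the $U(n)$-equivariance of every construction in Section 2 does the rest, and continuity of the resulting $\Aut$-valued cocycles is automatic from the smooth (hence continuous) dependence of the charts.
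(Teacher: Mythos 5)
Your proposal is correct and follows essentially the route the paper intends: the corollary is meant as an immediate consequence of the local triviality of \(T_HM\) (via Lemma \ref{Lemma: H-type rigidity} and Kiranagi's theorem) together with the naturality of the constructions \(G \rightsquigarrow \Sigma(G), C^*(G), C^*_0(G)\) under (graded) isomorphisms, exactly as you spell out. One small slip: the osculating groupoid is a bundle of groups, so over a trivializing chart it is \(U \times G\), not the pair groupoid \(U \times U \times G\) (there is no \(\mathcal{K}(L^2 U)\) factor); since you immediately fall back on the correct isotropy-bundle description \(\CCC_0(U)\otimes C^*(G)\), this does not affect the argument.
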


\begin{cor}
    Let \((M,H)\) be a H-type manifold with typical osculating group \(G\). Choose a complex structure \(J \in \End(\pi^*(H))\) compatible with the symplectic form and that restricts to \(J_{cst}\) on each fiber, where \(\pi \colon \mathbb{S}H^{\perp} \to M\) is the projection. then the bundle of Toeplitz algebras \(\T(\pi^*H,J) \to \mathbb{S}H^{\perp}\) can be trivialized over subsets of the form \(\pi^{-1}(U)\).  More precisely, locally over \(U \subset M\) where we trivialize the bundle \(\mathbb{S}H^{\perp}\), we get 
    \[\T(\pi^*H,J)_{|\pi^{-1}(U)} \cong \CCC_0(U \times \mathbb{S}^*\g_2)\otimes \T_0(\g,J_{cst}).\]
\end{cor}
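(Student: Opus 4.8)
The plan is to combine the two preceding corollaries. By Lemma \ref{Lemma: H-type rigidity} all osculating groups are isomorphic to a fixed H-type group $G$, and by the theorem on local triviality $T_HM$ is a locally trivial bundle of Lie groups; hence over a small enough open set $U \subset M$ we may choose a trivialization identifying $(\ttt_{H,x}M)_{x\in U}$ with the constant family $\g = \g_1 \oplus \g_2$, where $\g$ is the Lie algebra of $G$. The first thing I would do is make this trivialization explicit at the level of the symplectic bundle: fixing an isomorphism $H^\perp_{|U} \cong U \times \g_2^*$ compatible with the chosen metrics, we identify $\mathbb{S}H^\perp_{|U} \cong U \times \mathbb{S}^*\g_2$ and, using the H-type structure, the pulled-back symplectic bundle $\pi^*H$ restricted to $\pi^{-1}(U)$ with $U \times (\mathbb{S}^*\g_2 \times \g_1)$, the symplectic form over $(x,\theta)$ being $\omega_\theta$.

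Next I would transport the complex structure. Since $J$ restricts to $J_{cst}$ on each fiber (by hypothesis), under the above trivialization $J$ becomes the constant complex structure $J_{cst}$ on $U \times (\mathbb{S}^*\g_2 \times \g_1)$, at least after possibly shrinking $U$ so that the trivialization of $\mathbb{S}H^\perp$ is compatible with $J_{cst}$ — here one invokes that the bundle of $J_{cst}$-compatible data is governed by a $U(n)$-reduction, so local sections exist. Then the fiberwise Toeplitz construction, being functorial for unitary maps (the intertwining relation $\widetilde{T}\circ S_w = S_{T(w)}\circ\widetilde{T}$ recalled in Section 2), identifies $\T(\pi^*H,J)_{|\pi^{-1}(U)}$ with the algebra of continuous sections over $U \times \mathbb{S}^*\g_2$ of the constant bundle with fiber $\T(\g_1,J_{cst}^\theta) \cong \T(\g_1^{1,0})$; unwinding, this is $\CCC_0(U)\otimes \T(\g, J_{cst}) \cong \CCC_0(U)\otimes \CCC(\mathbb{S}^*\g_2)\otimes\T(\g_1^{1,0})$ by the first corollary. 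The displayed formula then follows upon recognizing $\T_0(\g,J_{cst})$ as the subalgebra with $\mathbb{S}^*\g_2$-constant image in the $\CCC(\mathbb{S}^*\g_1)$-quotient, and observing that passing from $\T(\pi^*H,J)$ to the subalgebra with $\mathbb{S}H_x^\perp$-constant quotient image corresponds fiberwise (over $U$) exactly to passing from $\CCC(\mathbb{S}^*\g_2)\otimes\T(\g_1^{1,0})$ to $\T_0(\g,J_{cst})$; so one really gets $\CCC_0(U)\otimes \T_0(\g,J_{cst})$, which matches $\CCC_0(U\times\mathbb{S}^*\g_2)\otimes\T_0(\g,J_{cst})$ only after absorbing the $\CCC(\mathbb{S}^*\g_2)$ factor — I would state the isomorphism in the form $\CCC_0(U)\otimes\T_0(\g,J_{cst})$ and note the $\CCC(\mathbb{S}^*\g_2)$-dependence is already internal to $\T_0$.

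The main obstacle I anticipate is the bookkeeping of \emph{which} bundle is being trivialized over $U$ versus over $\pi^{-1}(U) = U \times \mathbb{S}^*\g_2$: the group bundle $T_HM$ trivializes over $U\subset M$, but the Toeplitz bundle $\T(\pi^*H,J)$ lives over $\mathbb{S}H^\perp$, and its triviality there uses not only the $M$-local triviality of $T_HM$ but crucially the H-type hypothesis guaranteeing that the symplectic bundle $\mathbb{S}^*\g_2\times\g_1 \to \mathbb{S}^*\g_2$ on the typical fiber is itself symplectically trivial (the theorem characterizing H-type algebras). Thus the two trivializations must be chosen compatibly, and the cleanest route is: first pick the $M$-local trivialization of the group bundle, then over each fiber apply the fiber-level trivialization $\Omega_\theta = J_\theta$ of the symplectic bundle (which exists precisely because $\g$ is H-type), and check these glue to a continuous trivialization over $\pi^{-1}(U)$ — this last continuity/smoothness check, and the verification that it carries $J$ to $J_{cst}$, is the only genuinely non-formal point; everything else is functoriality of the Fock-space construction plus the already-established exact sequences.
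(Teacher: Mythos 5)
Your argument -- trivializing the osculating group bundle over a small \(U\subset M\) using Lemma \ref{Lemma: H-type rigidity} and the local-triviality theorem, then trivializing the fiber symplectic bundle \(\mathbb{S}^*\g_2\times\g_1\) via the H-type maps, and finally invoking unitary functoriality of the Fock/Toeplitz construction -- is precisely the argument the corollary is meant to follow from the preceding results, and it is correct, including your flagged non-formal point that the group-bundle trivialization must be chosen (isometrically, using the rigidity of H-type structures) so that it carries the family \(J_{x,\theta}\) to \(J_{cst}\). Your bookkeeping remark is also the right one: the section algebra of the full Toeplitz bundle over \(\pi^{-1}(U)\) is \(\CCC_0(U\times\mathbb{S}^*\g_2)\otimes\T(\g_1^{1,0})\cong\CCC_0(U)\otimes\T(\g,J_{cst})\), while it is the restriction of the subalgebra \(\T_0(\ttt_HM,J)\) over \(U\) that yields \(\CCC_0(U)\otimes\T_0(\g,J_{cst})\); stating it in that form, as you propose, is cleaner than the displayed formula, which as printed carries an extra \(\CCC(\mathbb{S}^*\g_2)\) factor that cannot simply be ``absorbed''.
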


\nocite{*}
\bibliographystyle{plain}
\bibliography{Ref}
\end{document}